\newtheorem {theorem}{Theorem}[section]
\newtheorem {corollary}{Corollary}[section]
\newtheorem {lemma}{Lemma}[section]
\newtheorem {definition}{Definition}[section]
\newtheorem {remark}{Remark}[section]
\def\ar{a\kern-.370em\raise.16ex\hbox{\char95\kern-0.53ex\char'47}\kern.05em}
\def\ees{{\accent"5E e}\kern-.385em\raise.2ex\hbox{\char'23}\kern-.08em}
\def\eex{{\accent"5E e}\kern-.470em\raise.3ex\hbox{\char'176}}
\def\AR{A\kern-.46em\raise.80ex\hbox{\char95\kern-0.53ex\char'47}\kern.13em}
\def\EES{{\accent"5E E}\kern-.5em\raise.8ex\hbox{\char'23 }}
\def\EEX{{\accent"5E E}\kern-.60em\raise.9ex\hbox{\char'176}\kern.1em}
\def\ow{o\kern-.42em\raise.82ex\hbox{
  \vrule width .12em height .0ex depth .075ex \kern-0.16em \char'56}\kern-.07em}
\def\OW{O\kern-.460em\raise1.36ex\hbox{
\vrule width .13em height .0ex depth .075ex \kern-0.16em \char'56}\kern-.07em}
\def\UW{U\kern-.42em\raise1.36ex\hbox{
\vrule width .13em height .0ex depth .075ex \kern-0.16em \char'56}\kern-.07em}
\def\DD{D\kern-.7em\raise0.4ex\hbox{\char '55}\kern.33em}
\title[Bifurcation sets and global monodromies]{Bifurcation sets and global monodromies of Newton non-degenerate polynomials on algebraic sets}
\author{TAT THANG NGUYEN$^{\dag}$}
\address{$^\dag$Institute of Mathematics, 18, Hoang Quoc Viet Road, Cau Giay District 10307, Hanoi, Vietnam}
\email{ntthang@math.ac.vn}
\author{PH\'{U}-PH\'{A}T PH\d{A}M$^\ddag$}
\address{$^{\ddag}$Department of Mathematics, University of Dalat, 1 Phu Dong Thien Vuong, Dalat, Vietnam}
\email{phatpham.pr13@gmail.com}
\author{TI\EES N-S\OW n PH\d{A}M$^{*}$}
\address{$^*$Department of Mathematics, University of Dalat, 1 Phu Dong Thien Vuong, Dalat, Vietnam}
\email{sonpt@dlu.edu.vn}
\thanks{The first author is partially supported by Vietnam National Foundation for Science and Technology Development (NAFOSTED) under Grant No.
101.04-2017.12. The second author and the third author are partially supported by Vietnam National Foundation for Science and Technology Development (NAFOSTED)  under Grant No. 101.04-2016.05.}
\thanks{$^{*}$Corresponding author}
\subjclass{14D06, 32S15, 58K05, 58K10}
\keywords{singularities of polynomial functions, bifurcation set, global monodromies, fibrations, tangency values, Newton polyhedra}
\date{ \today}
\begin{document}
\maketitle

\begin{abstract}
Let $S\subset \mathbb{C}^n$ be a non-singular algebraic set and $f \colon \mathbb{C}^n \to \mathbb{C}$ be a polynomial function. It is well-known that the restriction $f|_S \colon S \to \mathbb{C}$ of $f$ on $S$ is a locally trivial fibration outside a finite set $B(f|_S) \subset \mathbb{C}.$ In this paper, we give an explicit description of a finite set $T_\infty(f|_S) \subset \mathbb{C}$ such that $B(f|_S) \subset K_0(f|_S) \cup T_\infty(f|_S),$ where $K_0(f|_S)$ denotes the set of critical values of the $f|_S.$ Furthermore, $T_\infty(f|_S)$ is contained in the set of critical values of certain polynomial functions provided that the $f|_S$ is Newton non-degenerate at infinity. Using these facts, we show that if $\{f_t\}_{t \in [0, 1]}$ is a family of polynomials such that the Newton polyhedron at infinity of $f_t$ is independent of $t$ and the $f_t|_S$ is Newton non-degenerate at infinity, then the global monodromies of the $f_t|_S$ are all isomorphic.
\end{abstract}

\section{Introduction}
Let $S\subset \mathbb{C}^n$ be a non-singular algebraic set and let $f \colon \mathbb{C}^n \rightarrow  \mathbb{C}$ be a polynomial function. In the seventies Thom \cite{Thom1969},  Varchenko \cite{Varchenko1972}, Verdier \cite{Verdier1976} and Wallace \cite{Wallace1971} proved that there exists a finite set $B \subset  \mathbb{C}$ such that the restriction map
\begin{eqnarray*}
f \colon S \setminus f^{-1}(B) \rightarrow \mathbb{C} \setminus B
\end{eqnarray*}
is a locally trivial $C^\infty$-fibration. We call the smallest such $B$ the {\em bifurcation set} of the restriction of $f$ on $S$ and we denote it by $B(f|_S).$  This fibration permits us to introduce the {\em global monodromy} of $f|_S.$ Namely, for $r > \max \{ |c|  \ : \ c \in B(f|_S)\}$ and $\mathbb{S}^1_r := \{c \in \mathbb{C} \ : \ |c| = r\},$ this is the restriction map
\begin{eqnarray*}
f \colon S \cap f^{-1}(\mathbb{S}^1_r) \rightarrow \mathbb{S}^1_r.
\end{eqnarray*}

The problem of studying the bifurcation set and global monodromy of polynomial functions has been extensively studied in several papers: for the case $S = \mathbb{C}^n$ we refer the reader to \cite{Artal2000, Bodin2003, Bodin2004, Broughton1988, Dimca2001, Durfee1998, HaHV1984, HaHV1989-1, HaHV1989-2, HaHV1990, HaHV1991-2, HaHV1996, HaHV1997, Kurdyka2000, Nemethi1990, Nemethi1992, Neumann2000, Phamts2008, Phamts2010, Parusinski1995, Rabier1997, Siersma1995, Siersma1998, Tibar1997}, etc., and for the general case to \cite{HaHV2008-2, Jelonek2004, Jelonek2005}.

Since the restriction $f|_S \colon S \rightarrow  \mathbb{C}$ is not proper, the bifurcation set $B(f|_S)$ of $f|_S$ contains not only the set $K_0(f|_S)$ of critical values of $f|_S,$ but also other values due to the asymptotical ``bad'' behaviour at infinity. To control the set $B(f|_S),$ we use the set $T_\infty(f|_S)$ of {\em tangency values at infinity} of the $f|_S$ (see the definition in Section~\ref{Section3}). It will be shown in Section~\ref{Section3} that 
$T_\infty(f|_S)$ is a finite set and that $B(f|_S) \subset K_0(f|_S) \cup T_\infty(f|_S),$ which means that $f|_S$ is a locally trivial fibration over the complement of $K_0(f|_S) \cup T_\infty(f|_S).$ Furthermore, the set $T_\infty(f|_S)$ is contained in the set of critical values of certain polynomial functions provided that the restriction $f|_S$ is Newton non-degenerate at infinity. These results generalize those given in \cite{Nemethi1990}; for related results we refer the reader to \cite{Bodin2004, Chen2012, Chen2014, Ishikawa2002, Jelonek2004, Jelonek2005, Kurdyka2000, Nguyen2013, Zaharia1996}.

In Section~\ref{Section4}, using the results mentioned above, we will prove a stability theorem, which states that if $\{f_t\}_{t \in [0, 1]}$ is a family of polynomial functions on $\mathbb{C}^n$ such that the Newton polyhedron at infinity of $f_t$ is independent of $t$ and the restriction $f_t|_S$ is Newton non-degenerate at infinity, then the global monodromies of the  $f_t|_S$ are all isomorphic. This generalizes \cite[Theorem~17]{Nemethi1992} and \cite[Theorem~1.1]{Phamts2010}, where the case $S = \mathbb{C}^n$ was studied.

\section{Notations and Definitions} \label{Preliminary}

In this section we present some notations and definitions, which are used throughout this paper. 

\subsection{Notations}

We suppose $1 \leqslant  n \in \mathbb{N}$ and abbreviate $(x_1, \ldots, x_n)$ by $x.$  Let $\mathbb{K} := \mathbb{R}$ or $\mathbb{C}.$ The inner product (resp., norm) on $\mathbb{K}^n$  is denoted by $\langle x, y \rangle$ for any $x, y \in \mathbb{K}^n$ (resp., $\| x \| := \sqrt{\langle x, x \rangle}$ for any $x \in \mathbb{K}^n$). The real part and complex conjugate of a complex number $c \in \mathbb{C}$ are denoted by $\Re c$ and $\overline{c},$ respectively.

For each $r > 0,$ we will write $D_r := \{c \in \mathbb{C} \ : \ |c| < r\}$ for the open disc and write $\mathbb{S}^{2n - 1}_r :=
\{x \in \mathbb{C}^n \ : \ \|x\| = r\}$ for the sphere.

Given nonempty sets $I \subset \{1, \ldots, n\}$ and $A \subset \mathbb{K}^n,$ we define
$$A^I := \{x \in A \ : \ x_i = 0 \textrm{ for all } i \not \in I\}.$$

Let $\mathbb{C}^* := \mathbb{C} \setminus \{0\}$ and we denote by $\mathbb{Z}_+$ the set of non-negative integer numbers. If $\alpha = (\alpha_1, \ldots, \alpha_n) \in \mathbb{Z}_+^n,$ we denote by $x^\alpha$ the monomial $x_1^{\alpha_1} \cdots x_n^{\alpha_n}.$

The gradient of a polynomial function $f \colon \mathbb{C}^n \to \mathbb{C}$ is denoted by $\nabla f$ as usual, i.e., 
$$\nabla f(x) := \left(\overline{\frac{\partial f}{\partial x_1}(x), \ldots, \frac{\partial f}{\partial x_n}(x)} \right),$$
so the chain rule may expressed by the inner product $\partial f/\partial {\mathbf{v}} = \langle \mathbf{v}, \nabla f \rangle.$

\subsection{Newton polyhedra and non-degeneracy conditions}

Let $f \colon \mathbb{C}^n \to \mathbb{C}$ be a polynomial function. Suppose that $f$ is written as $f = \sum_{\alpha} a_\alpha x^\alpha.$ Then
the support of $f,$ denoted by $\mathrm{supp}(f),$ is defined as the set of those $\alpha  \in \mathbb{Z}_+^n$ such that $a_\alpha \ne 0.$ 
The {\em Newton polyhedron\footnote{Note that we do not include the origin in the definition of the Newton polyhedron $\Gamma(f).$} (at infinity)}  of $f$, denoted by $\Gamma(f),$ is defined as the convex hull in $\mathbb{R}^n$ of the set $\mathrm{supp}(f).$ 
The polynomial $f$ is said to be {\em convenient} if $\Gamma(f)$ intersects each coordinate axis in a point different from the origin $0$ in $\mathbb{R}^n.$ 
For each (closed) face $\Delta$ of $\Gamma(f),$  we will denote by $f_\Delta$ the polynomial $\sum_{\alpha \in \Delta} a_\alpha x^\alpha;$ if $\Delta \cap \mathrm{supp}(f) = \emptyset$ we let $f_\Delta := 0.$

Given a nonzero vector $q \in \mathbb{R}^n,$ we define
\begin{eqnarray*}
d(q, \Gamma(f)) &:=& \min \{\langle q, \alpha \rangle \ : \ \alpha \in \Gamma(f)\}, \\
\Delta(q, \Gamma(f)) &:=& \{\alpha \in \Gamma(f) \ : \ \langle q, \alpha \rangle = d(q, \Gamma(f)) \}.
\end{eqnarray*}
By definition, for each nonzero vector $q \in \mathbb{R}^n,$ $\Delta(q, \Gamma(f)) $ is a closed face of $\Gamma(f).$ Conversely, if $\Delta$ is a closed face of $\Gamma(f)$ then there exists a nonzero vector\footnote{Since $\Gamma(f)$ is an integer polyhedron, we can assume that all the coordinates of $q$ are rational numbers.} $q \in \mathbb{R}^n$ such that $\Delta = \Delta(q, \Gamma(f)).$ 

\begin{remark}{\rm
The following statements follow immediately from definitions:

(i) For each nonempty subset $I$ of $\{1, \ldots, n\},$ if the restriction of $f$ on $\mathbb{C}^I$ is not identically zero, then $\Gamma(f) \cap \mathbb{R}^I = \Gamma(f|_{\mathbb{C}^I}).$

(ii) Let $\Delta := \Delta(q, \Gamma(f))$ for some nonzero vector $q := (q_1, \ldots, q_n) \in \mathbb{R}^n.$ By definition, $f_\Delta = \sum_{\alpha \in \Delta} a_\alpha x^\alpha$ is a weighted homogeneous polynomial of type $(q, d := d(q, \Gamma(f))),$ i.e., we have for all $t > 0$ and all $x \in \mathbb{C}^n,$
$$f_\Delta(t^{q_1} x_1,  \ldots, t^{q_n} x_n) = t^d f_\Delta(x_1, \ldots, x_n).$$
This implies the Euler relation
\begin{equation*}
\sum_{i = 1}^n q_i x_i \frac{\partial f_\Delta}{\partial x_i}(x) = d f_\Delta(x).
\end{equation*}
In particular, if $d \ne 0$ and $\nabla f_\Delta(x) = 0,$ then $f_\Delta(x) = 0.$
}\end{remark}

For the rest of this section, let $g_1, \ldots, g_p \colon \mathbb{C}^n \rightarrow \mathbb{C}$ be polynomial functions and set
$$S := \{x \in \mathbb{C}^n \ : \ g_1(x)  = 0, \ldots, g_p(x)  = 0\}.$$
The following definition of non-degeneracy is inspired from the work of Kouchnirenko~\cite{Kouchnirenko1976}, where the case $S = \mathbb{C}^n$ was considered.

\begin{definition}\label{Definition21}{\rm
We say that the restriction of $f$ on $S$ is {\em Newton non-degenerate at infinity} if, and only if, for every nonempty set $I \subset \{1, \ldots, n\}$ with $f|_{\mathbb{C}^I} \not \equiv 0,$ for every (possibly empty) set $J \subset \{j \in \{1, \ldots, p\} \ : \ g_j|_{\mathbb{C}^I} \not \equiv 0\},$ and for every vector $q \in \mathbb{R}^n$ with $\min_{i \in I} q_i < 0,$ the following conditions hold:
\begin{itemize}
\item[(i)] the set 
\begin{eqnarray*}
\{x \in \mathbb{C}^{*n} \ : \ g_{j, \Delta_j} (x) = 0 \textrm{ for } j \in J \}
\end{eqnarray*}
is a reduced smooth complete intersection variety in the torus $\mathbb{C}^{*n},$ i.e., the system of gradient vectors $\nabla g_{j, \Delta_j}(x)$ for $j \in J$ is $\mathbb{C}$-linearly independent on this variety;

\item[(ii)] if $d(q, \Gamma(f|_{\mathbb{C}^I})) < 0,$ then the set 
\begin{eqnarray*}
\{x \in \mathbb{C}^{*n} \ : \ f_{\Delta_0}(x) = 0 \textrm{ and } g_{j, \Delta_j} (x) = 0 \textrm{ for } j \in J \}
\end{eqnarray*}
is a reduced smooth complete intersection variety in the torus $\mathbb{C}^{*n};$
\end{itemize}
where $\Delta_0 := \Delta(q, \Gamma(f|_{\mathbb{C}^I}))$ and $\Delta_j := \Delta(q, \Gamma(g_j|_{\mathbb{C}^I}))$ for $j \in J.$
}\end{definition}

Finally, following \cite{Nemethi1990}, we introduce a set, which plays an important role in the sequel. Namely, let $\Sigma_\infty(f|_S)$ denote the set of all values $c \in \mathbb{C}$ for which there exist 
a nonempty set $I \subset \{1, \ldots, n\}$ with $f|_{\mathbb{C}^I} \not \equiv 0,$ 
a (possibly empty) set $J \subset \{j \in \{1, \ldots, p\} \ : \ g_j|_{\mathbb{C}^I} \not \equiv 0\},$ 
a vector $q \in \mathbb{R}^n$ with $\min_{i \in I} q_i < 0$ and $d(q, \Gamma(f|_{\mathbb{C}^I})) = 0,$ 
a point $x \in \mathbb{C}^{*I},$ and scalars $\lambda_j \in \mathbb{C}$ for $j \in J,$ such that
the following conditions hold:
\begin{eqnarray*}
&& c \ = \ f_{\Delta_0}(x), \\
&& g_{j, \Delta_j} (x) \ = \ 0 \textrm{ for } j \in J, \\
&& \nabla f_{\Delta_0}(x) + \sum_{j \in J} \lambda_j \nabla g_{j, \Delta_j} (x) \ = \ 0,
\end{eqnarray*}
where $\Delta_0 := \Delta(q, \Gamma(f|_{\mathbb{C}^I}))$ and $\Delta_j := \Delta(q, \Gamma(g_j|_{\mathbb{C}^I}))$ for $j \in J.$ 

We observe that the above value $c \in \Sigma_\infty(f|_S)$ is indeed a critical value of the restriction of the polynomial $f_{\Delta_0}$ on the variety
$$\{x \in \mathbb{C}^{*I} \ : \ g_{j, \Delta_j} (x) = 0 \quad \textrm{ for } \quad j \in J\}.$$ 
Hence, by the Bertini--Sard theorem, $\Sigma_\infty(f|_S)$ is a finite set provided that the restriction $f|_S$ is Newton  non-degenerate at infinity.

\section{The bifurcation set of a polynomial function} \label{Section3}

From now on, let $g_1, \ldots, g_p \colon \mathbb{C}^n \rightarrow \mathbb{C}$ be polynomial functions such that the algebraic set
$$S := \{x \in \mathbb{C}^n \ : \ g_1(x)  = 0, \ldots, g_p(x)  = 0\}$$
is a reduced smooth complete intersection variety, i.e., the system of gradient vectors 
$$\nabla g_1(x), \ldots, \nabla g_p(x)$$
is $\mathbb{C}$-linearly independent for all $x \in S.$ 

\begin{lemma}\label{Lemma31}
There exists a real number $R_0 > 0$ such that for all $R \geqslant R_0,$ the set $S$ intersects transversally with the sphere $\mathbb{S}^{2n-1}_R$.
\end{lemma}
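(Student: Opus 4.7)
The plan is to reduce the lemma to showing that the set of critical points of the restricted squared-norm function $\rho|_S$, where $\rho(x):=\|x\|^2$, is bounded, and then to obtain this boundedness by applying the curve selection lemma at infinity together with a direct computation. The reduction is immediate: $\mathbb{S}^{2n-1}_R$ is the level set $\rho^{-1}(R^2)$, so $S$ meets $\mathbb{S}^{2n-1}_R$ transversally at a point $x$ precisely when $x$ is not a critical point of $\rho|_S$. Hence it suffices to take $R_0$ with $R_0^2$ strictly larger than $\sup\{\|x\|^2 : x \text{ is a critical point of } \rho|_S\}$, provided this supremum is finite.

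Next I would characterize the critical set $\Sigma$ of $\rho|_S$ by a complex Lagrange-type condition. Since $T_xS$ is a complex linear subspace of $\mathbb{C}^n$ and $d\rho(x)(v)=2\Re\langle v,x\rangle$ (with $\langle\cdot,\cdot\rangle$ the Hermitian inner product), the vanishing of $d\rho(x)$ on $T_xS$ is equivalent to $x$ lying in the Hermitian orthogonal complement of $T_xS$. Using the paper's convention $\nabla g_j(x)=\overline{(\partial g_j/\partial x_1,\ldots,\partial g_j/\partial x_n)}(x)$, one has $T_xS=\{v:\langle v,\nabla g_j(x)\rangle=0,\ j=1,\ldots,p\}$, and hence its Hermitian orthogonal complement is the $\mathbb{C}$-span of $\nabla g_1(x),\ldots,\nabla g_p(x)$. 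Consequently
\[
\Sigma \;=\; \Bigl\{x\in S : \exists\,\lambda_1,\ldots,\lambda_p\in\mathbb{C},\ x=\sum_{j=1}^p\lambda_j\nabla g_j(x)\Bigr\},
\]
and by the assumed $\mathbb{C}$-linear independence of the $\nabla g_j(x)$ along $S$ the scalars $\lambda_j$ are uniquely determined by $x\in\Sigma$, so $\Sigma$ is a real semialgebraic subset of $\mathbb{C}^n\simeq\mathbb{R}^{2n}$.

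Finally I would argue by contradiction. If $\Sigma$ is unbounded, the curve selection lemma at infinity produces a real analytic curve $x\colon(0,\varepsilon)\to\Sigma$ with $\|x(t)\|\to\infty$ as $t\to 0^+$, together with real analytic functions $\lambda_j(t)$ satisfying $x(t)=\sum_j\lambda_j(t)\nabla g_j(x(t))$ and $g_j(x(t))\equiv 0$. Since each $g_j$ is holomorphic, differentiating the identity $g_j(x(t))=0$ gives $\sum_i(\partial g_j/\partial x_i)(x(t))\,x_i'(t)=0$, while conjugating the Lagrange relation yields $\overline{x_i(t)}=\sum_j\overline{\lambda_j(t)}\,(\partial g_j/\partial x_i)(x(t))$. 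Combining these,
\[
\frac{d}{dt}\|x(t)\|^2 \;=\; 2\Re\sum_i\overline{x_i(t)}\,x_i'(t) \;=\; 2\Re\sum_j\overline{\lambda_j(t)}\,\frac{d}{dt}g_j(x(t)) \;=\; 0,
\]
so $\|x(t)\|$ is constant in $t$, contradicting $\|x(t)\|\to\infty$. The only technical care needed is the bookkeeping of the complex conjugation in the paper's definition of $\nabla$, together with the verification that $\Sigma$ is semialgebraic so that the curve selection lemma at infinity is applicable; neither step is a serious obstacle.
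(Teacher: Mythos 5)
Your proposal is correct and follows essentially the same route as the paper: reduce to the unboundedness of the tangency/critical locus, apply the curve selection lemma at infinity, and derive $\frac{d}{dt}\|x(t)\|^2 = 0$ by combining the Lagrange relation with the identities $g_j(x(t))\equiv 0$. The only cosmetic difference is that you normalize $\lambda_{p+1}=1$ from the outset by working directly with critical points of $\rho|_S$, whereas the paper carries $\lambda_{p+1}$ along and separately observes it cannot vanish identically, by the non-singularity of $S$.
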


\begin{proof}
We argue by contradiction. Suppose that there exist sequences $\{x^k\}_{k\in \mathbb{N}} \subset \mathbb{C}^n$ and $\{\lambda_j^k\}_{k\in \mathbb{N}} \subset \mathbb{C}, j=1,\ldots,p+1,$ such that
\begin{enumerate}
\item[(a1)] $\Vert x^k \Vert \rightarrow \infty$ as $k\rightarrow \infty;$
\item[(a2)] $g_j(x^k) = 0$ for all $j=1,\ldots,p,$ and all $k \in \mathbb{N};$
\item[(a3)] $\sum_{j=1}^p\lambda_j^k\nabla g_j(x^k) = \lambda_{p+1}^k {x^k};$
\item[(a4)] The numbers $\lambda_j^k, j = 1, \ldots, p + 1,$ are not all zero for all $k\in \mathbb{N}.$
\end{enumerate} 
By the Curve Selection Lemma at infinity (see \cite{Nemethi1992} or \cite{HaHV2017}), there exist analytic curves
\begin{eqnarray*}
\phi \colon (0,\epsilon) \rightarrow \mathbb{C}^n \quad \textrm{ and } \quad  \lambda_j \colon (0,\epsilon) \rightarrow \mathbb{C}, \ j = 1,\ldots,p+1,
\end{eqnarray*}
such that
\begin{enumerate}
\item[(a5)] $\Vert \phi(s) \Vert \rightarrow \infty$ as $ s\rightarrow 0;$
\item[(a6)] $g_j(\phi(s)) = 0$ for all $j=1,\ldots,p$, and all $s \in (0,\epsilon);$
\item[(a7)] $\sum_{j=1}^p\lambda_j(s)\nabla g_j(\phi(s)) = \lambda_{p+1}(s) {\phi(s)}\textrm{ for } s\in (0,\epsilon);$
\item[(a8)] $\lambda_j(s), j = 1, \ldots, p + 1,$ are not all zero for $s\in(0,\epsilon).$
\end{enumerate}

We have
\begin{eqnarray*}
\left\langle \frac{{d \phi(s)}}{ds}, \sum_{j=1}^p\lambda_j(s)\nabla g_j(\phi(s)) \right\rangle 
&=& \sum_{j=1}^p \overline{\lambda_j(s)} \left\langle \frac{{d \phi(s)}}{ds}, \nabla g_j(\phi(s)) \right\rangle\\
&=& \sum_{j = 1}^p \overline{\lambda_j(s)} \frac{d}{ds} (g_j \circ \phi)(s) \\
&=& 0.
\end{eqnarray*}
Combined with the condition (a7), this implies that
\begin{eqnarray*}
0 & = & \overline{\lambda_{p+1}(s)} \Re \left\langle \frac{{d\phi(s)}}{ds}, {\phi(s)} \right\rangle\ =\ \overline{\lambda_{p+1}(s)} \frac{d\Vert\phi(s)\Vert^2}{2ds}.
\end{eqnarray*}
But $\lambda_{p+1}\not\equiv 0,$ which follows from the non-singularity of $S$ and the condition (a7). Hence,
\begin{eqnarray*}
\frac{d\Vert\phi(s)\Vert^2}{ds} &=& 0
\end{eqnarray*}
for all $s > 0$ small enough, which contradicts the condition (a5).
\end{proof}

For the rest of this section, let $f \colon \mathbb{C}^n \rightarrow \mathbb{C}$ be a polynomial function. It is well known that 
the bifurcation set $B(f|_S)$ of the restriction $f|_S \colon S \rightarrow \mathbb{C}$ contains the set $K_0(f|_S).$ Recall that we write $K_0(f|_S)$ for {\em the set of critical values} of the restriction of $f$ on $S,$ i.e., 
\begin{eqnarray*}
K_0(f|_S) := \{c\in \mathbb{C} & : &  \exists x \in S, \exists \lambda_j \in \mathbb{C}, j = 1, \ldots, p,  \textrm{ such that } \\
&& f(x) = c \quad \textrm{ and } \quad \nabla f(x) + \sum_{j = 1}^p \lambda_j \nabla g_j(x) = 0\}.
\end{eqnarray*}
By the Bertini--Sard theorem, $K_0(f|_S)$ is a finite set.

Before formulating our first theorem, we also need the following concept (see also \cite[Chapter~2]{HaHV2017}). 

\begin{definition}{\rm
By the {\em set of tangency values at infinity} of the $f|_S$ we mean the set
\begin{eqnarray*}
T_\infty(f|_S) &:= & \{c \in \mathbb{C} \ :  \ \exists \{x^k\} \subset S, \exists \{\lambda^k_j\}\subset \mathbb{C}, j = 1, \ldots, p + 1, \|x^k\|\to \infty, f(x^k)\to c, \\
& & \qquad \qquad \ 
\nabla f(x^k) + \sum_{j=1}^p \lambda_j^k \nabla g_j(x^k)= \lambda_{p+1}^k {x^k} \ \textrm{ for all } \ k \in \mathbb{N} \}.
\end{eqnarray*}
}\end{definition}

Notice that for $S = \mathbb{C}^n$ the set $T_\infty(f|_S) $ coincides to the set $S_f$ defined by Nemethi and Zaharia \cite{Nemethi1990}.

\begin{theorem}\label{Theorem31}
$T_\infty(f|_S)$ is a finite set and the following inclusion holds
\begin{eqnarray}\label{EQ01}
B(f|_S) & \subset & K_0(f|_S) \cup T_\infty(f|_S).
\end{eqnarray}
\end{theorem}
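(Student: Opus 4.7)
The plan is to prove separately that $T_\infty(f|_S)$ is finite and that any $c_0 \notin K_0(f|_S) \cup T_\infty(f|_S)$ is a regular value for the fibration $f|_S$.

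For finiteness, I would first reformulate the tangency condition: the existence of multipliers $\lambda_j$ with $\nabla f(x) + \sum_{j=1}^p \lambda_j \nabla g_j(x) = \lambda_{p+1} x$ is equivalent to the determinantal condition $\nabla f(x) \in \mathrm{span}_\mathbb{C}\{\nabla g_1(x), \ldots, \nabla g_p(x), x\}$. This cuts out a real-algebraic subset $M \subset S$, and by construction $T_\infty(f|_S)$ consists precisely of the asymptotic values of $f|_M$ at infinity. Applying the Curve Selection Lemma at infinity to $M$, each such $c$ is realized as $\lim_{s \to 0^+} f(\phi(s))$ for some real-analytic arc $\phi(s) \in M$ with $\|\phi(s)\| \to \infty$. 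Since $M$ has only finitely many real-analytic branches at infinity and each branch yields a single limit value, $T_\infty(f|_S)$ is finite.

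For the inclusion, fix $c_0 \notin K_0(f|_S) \cup T_\infty(f|_S)$; I would construct a smooth trivialization of $f|_S$ over a small disk $D_\delta(c_0)$. By Lemma~\ref{Lemma31} and compactness, there exist $\delta > 0$ and $R \geqslant R_0$ such that $f|_S$ has no critical points in $S \cap f^{-1}(\overline{D_\delta(c_0)}) \cap \{\|x\| \leqslant R+1\}$, and $S$ meets each sphere $\mathbb{S}^{2n-1}_r$ transversally for $r \geqslant R_0$. Moreover, the assumption $c_0 \notin T_\infty(f|_S)$ together with the Curve Selection Lemma at infinity yields a uniform lower bound on the distance from $\nabla f(x)$ to $\mathrm{span}_\mathbb{C}\{\nabla g_1(x), \ldots, \nabla g_p(x), x\}$ for $x \in S$ with $\|x\| \geqslant R$ and $|f(x) - c_0| < \delta$. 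Outside the large ball this non-degeneracy lets me solve a linear system for a smooth vector field $v_\infty$ tangent to $S$ and to each sphere $\mathbb{S}^{2n-1}_{\|x\|}$ with $df(v_\infty) = 1$; in the compact part, the assumption $c_0 \notin K_0(f|_S)$ yields a vector field $v_0$ tangent to $S$ with $df(v_0) = 1$. A smooth partition of unity glues $v_0$ and $v_\infty$ into a single vector field $v$ on $S \cap f^{-1}(D_\delta(c_0))$ whose flow produces the required local trivialization.

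The main technical obstacle is completeness of the flow of $v$, i.e., ensuring integral curves do not leave the domain in finite time. The identity $df(v) = 1$ controls motion in the base $D_\delta(c_0)$, while the sphere-tangency of $v_\infty$ outside the ball, combined with a uniform norm bound $\|v_\infty\| \leqslant C$ coming from the distance estimate above, confines each flow line to a compact region of $S$. The uniform distance estimate is itself the crucial step where the Curve Selection Lemma at infinity enters decisively: any failure would furnish an analytic arc along which $\|x\| \to \infty$, $f(x) \to c_0$, and the distance tends to zero, violating $c_0 \notin T_\infty(f|_S)$.
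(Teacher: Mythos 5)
The second half of your argument (the inclusion $B(f|_S) \subset K_0(f|_S)\cup T_\infty(f|_S)$) follows the paper's strategy almost exactly: construct a sphere-tangent vector field near infinity using $c_0\notin T_\infty(f|_S)$, an ordinary lifting vector field on the compact part using $c_0\notin K_0(f|_S)$, glue with a partition of unity, and integrate. That part is fine, as is your observation that sphere-tangency confines the flow.

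The finiteness argument, however, has a genuine gap. You claim that the tangency set $M\subset S$ ``has only finitely many real-analytic branches at infinity and each branch yields a single limit value.'' This is false in general: $M$ is a real-algebraic subset of $S$ that typically has real dimension at least $2$. Already in the simplest relevant case $S=\mathbb{C}^n$, $p=0$, the condition $\nabla f(x)\parallel x$ is one complex equation in $\mathbb{C}^n$, so $M$ is a real hypersurface of $\mathbb{C}^n$ (real dimension $2n-2\ge 2$ for $n\ge 2$). A real-algebraic set of dimension $\ge 2$ has no meaningful finite list of ``branches at infinity,'' and the set of limit values of a polynomial restricted to such a set going to infinity is, a priori, an entire continuum. (Consider $M=\{x_1\ge 1\}\subset\mathbb{R}^2$ and $f=x_2$: every real number is an asymptotic value.) The finiteness of $T_\infty(f|_S)$ is therefore not a soft topological consequence of $M$ being algebraic; it must come from the specific structure of the tangency equation. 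The paper's actual argument shows $T_\infty(f|_S)\subset K_\infty(f,S)$, the set of asymptotic critical values in the sense of Rabier, and then invokes Jelonek--Kurdyka's theorem that $K_\infty(f,S)$ is finite. The nontrivial step is the estimate along the selected curve: differentiating $\|\phi(s)\|^2$ and using the tangency relation together with $g_j\circ\phi\equiv 0$ forces $f\circ\phi\not\equiv c$ and yields the asymptotics $|\lambda_{p+1}(s)|\simeq s^{\beta-2\alpha}$, from which one reads off $\|\phi(s)\|\,\bigl\|\nabla f(\phi(s))+\sum_j\lambda_j(s)\nabla g_j(\phi(s))\bigr\|\simeq s^\beta\to 0$, so the Rabier condition holds. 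This quantitative computation is the heart of the finiteness proof and is absent from your proposal; without it, the Curve Selection Lemma alone does not bound the number of limit values.
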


\begin{proof}
In order to prove the set $T_\infty(f|_S)$ is finite, we use the {\em set of asymptotic critical values at infinity} of $f|_S$ (see \cite{Jelonek2004, Jelonek2005, Kurdyka2000, Rabier1997}): 
\begin{eqnarray*}
K_{\infty}(f, S) &:= & \{c \in \mathbb{C} \ :  \ \exists \{x^k\}\subset S, \|x^k\|\to \infty, f(x^k)\to c, \mbox{and } \\
& & \qquad \qquad \  \|x^k\|\nu(x^k)\to 0  \, \mbox{ as } \, k\to\infty\},
\end{eqnarray*}
where $\nu \colon\mathbb{C}^n\to\mathbb{R}$ is the {\em Rabier function} defined by
$$\nu(x) := \inf \left \{ \left\| \nabla f(x) + \sum_{j = 1}^p \lambda_j \nabla g_j(x) \right\| \ : \ \lambda_j \in \mathbb{C}, j = 1, \ldots, p\right \}.$$
We will show that
\begin{eqnarray}\label{EQ02}
T_\infty(f|_S) & \subset & K_\infty(f, S).
\end{eqnarray}
This, of course, implies immediately that $T_\infty(f|_S)$ is a finite set because we know from \cite[Theorem~3.3]{Jelonek2005} that $K_\infty(f|_S)$ is a finite set.

In order to prove the inclusion~\eqref{EQ02}, take any $c \in T_{\infty}(f|_S).$ By definition, there exist sequences $\{x^k\}_{k \in \mathbb{N}} \subset \mathbb{C}^n$ and $\{\lambda^k_j\}_{k \in \mathbb{N}} \subset \mathbb{C}, j = 1, \ldots, p + 1,$ such that 
\begin{enumerate}
\item[(a1)] $\Vert x^k \Vert \rightarrow \infty$ as $k\rightarrow \infty$;
\item[(a2)] $f(x^k) \rightarrow c$ as $k \rightarrow \infty;$
\item[(a3)] $g_j(x^k) = 0$ for all $j=1,\ldots,p,$ and all $k \in \mathbb{N};$
\item[(a4)] $\nabla f(x^k) + \sum_{j=1}^p \lambda_j^k \nabla g_j(x^k)= \lambda_{p+1}^k {x^k}$ for all $k \in \mathbb{N}.$
\end{enumerate} 
By the Curve Selection Lemma at infinity (see \cite{Nemethi1992} or \cite{HaHV2017}), there exist analytic curves
\begin{eqnarray*}
\phi \colon (0,\epsilon) \rightarrow \mathbb{C}^n \quad \textrm{ and } \quad  \lambda_j \colon (0,\epsilon) \rightarrow \mathbb{C}, \ j =  1, \ldots, p + 1,
\end{eqnarray*}
such that
\begin{enumerate}
\item[(a5)] $\Vert \phi(s) \Vert \rightarrow \infty$ as $ s\rightarrow 0;$
\item[(a6)] $f(\phi(s)) \rightarrow c$ as $s\rightarrow 0$;
\item[(a7)] $g_j(\phi(s)) = 0$ for all $j=1,\ldots,p,$ and all $s \in (0,\epsilon);$
\item[(a8)] $\nabla f(\phi(s)) + \sum_{j=1}^p \lambda_j(s) \nabla g_j(\phi(s)) = \lambda_{p+1}(s) {\phi(s)}\textrm{ for } s\in (0,\epsilon).$
\end{enumerate}
If $\lambda_{p + 1} \equiv 0,$ then it is clear that $c \in K_\infty(f, S)$ and there is nothing to prove. So we may assume that $\lambda_{p + 1}$ is not identically zero.
It follows from (a7) and (a8) that
\begin{eqnarray*}
0 \ \not \equiv \  \frac{d \|\phi(s)\|^2}{2 d s}
&=& \Re \left \langle \frac{{d \phi(s)}}{d s}, {\phi(s)} \right \rangle \\
&=& \Re \left \langle \frac{{d \phi(s)}}{d s}, \frac{1}{{\lambda_{p + 1}(s)}} \left [\nabla f(\phi(s)) + \sum_{j=1}^p \lambda_j(s) \nabla g_j(\phi(s)) \right] \right \rangle \\
&=& \Re \frac{1}{\overline{\lambda_{p + 1}(s)}} \left [\frac{d}{ds}(f\circ\phi)(s) + \sum_{j=1}^p \overline{\lambda_j(s)} \frac{d}{ds}(g_j\circ\phi)(s) \right] \\ 
&=& \Re \frac{1}{\overline{\lambda_{p + 1}(s)}} \left[\frac{d}{ds}(f\circ\phi)(s) \right].
\end{eqnarray*}
In particular, $f\circ\phi \not \equiv c.$ 

On the other hand, we may write
\begin{eqnarray*}
\|\phi(s)\| & = & a s^{\alpha} + \textrm{higher-order terms in } s,\\
f (\phi(s)) & = & c + b s^{\beta} + \textrm{higher-order terms in } s,
\end{eqnarray*}
where $a \ne 0, b \ne 0$ and $\alpha, \beta \in \mathbb{Q}.$ By the conditions (a5) and (a6) respectively, then $\alpha < 0$ and $\beta  > 0.$ Therefore, we have asymptotically as $s \to 0^+,$ 
\begin{eqnarray*}
|\lambda_{p + 1}(s)| &\simeq & s^{\beta - 2 \alpha}.
\end{eqnarray*}
It turns out from (a8) that
\begin{eqnarray*}
\|\phi(s)\| \| \nabla f(\phi(s)) + \sum_{j=1}^p \lambda_j(s) \nabla g_j(\phi(s)) \| & \simeq & s^{\beta}  \quad \textrm{ as } \ s \to 0^+,
\end{eqnarray*}
which yields $c \in K_\infty(f, S).$ Hence the inclusion~\eqref{EQ02} holds. 

For the proof of the inclusion~\eqref{EQ01} we fix $c^* \in \mathbb{C} \setminus (K_0(f|_S) \cup T_\infty(f|_S))$ and $D$ a small open disc centered at $c^*,$ with the closure $\overline{D} \subset \mathbb{C} \setminus (K_0(f|_S) \cup T_\infty(f|_S)).$ Then it is not hard to see that there exists a real number $R_0 > 0$ such that
for all $c \in D$ and all $R \geqslant  R_0,$ the fiber $(f|_S)^{-1}(c)$ is non-singular and intersects transversally with the sphere $\mathbb{S}^{2n - 1}_R$ (this is possible if $D$ is small enough). By continuity, there exists an open neighbourhood $U$ of $(f|_S)^{-1}(D) \cap \{x \in \mathbb{C}^n \ : \ \|x\| \ge R_0\}$ in $\mathbb{C}^n$ such that the vectors $\nabla f(x), \nabla g_1(x), \ldots, \nabla g_p(x),$ and ${x}$ are $\mathbb{C}$-linearly independent for all $x \in U.$ Therefore, we can find a smooth vector field $\mathbf{v}_1$ on $U$ satisfying the following conditions
\begin{itemize}
\item[(a1)] $\langle \mathbf{v}_1(x), {\nabla f(x)} \rangle = 1;$
\item[(a2)] $\langle \mathbf{v}_1(x), {\nabla g_j(x)} \rangle = 0$ for $j = 1, \ldots, p;$
\item[(a3)] $\langle \mathbf{v}_1(x), x \rangle = 0.$
\end{itemize}
(We can construct such a vector field locally, then extend it over $U$ by a smooth partition of unity.) 

We now fix $\epsilon > 0.$ Since $D \cap K_0(f, S) = \emptyset,$ the vectors $\nabla f(x), \nabla g_1(x), \ldots, \nabla g_p(x)$ are $\mathbb{C}$-linearly independent for all $x$ belonging to some open neighbourhood $V$ of $(f|_S)^{-1}(D) \cap \{x \in \mathbb{C}^n \ : \ \|x\| \le R_0 + \epsilon\}$ in $\mathbb{C}^n.$ Consequently, there exists a smooth vector field $\mathbf{v}_2$ on $V$ such that the following conditions hold
\begin{itemize}
\item[(a4)] $\langle \mathbf{v}_2(x), {\nabla f(x)} \rangle = 1;$
\item[(a5)] $\langle \mathbf{v}_2(x), {\nabla g_j(x)} \rangle = 0$ for $j = 1, \ldots, p.$
\end{itemize}
(We can construct such a vector field locally, then extend it over $V$ by a smooth partition of unity.) 

Next, we fix a partition of unity $\theta_1$ and $\theta_2$ subordinated to the covering
$$\left \{x \in U \ : \ \|x\| > R_0 + \frac{\epsilon}{3} \right\} \quad \textrm{ and  } \quad \left  \{x \in V \ : \ \|x\| < R_0 + \frac{2\epsilon}{3} \right\}$$
of $(f|S)^{-1}(D),$ and define the smooth vector field $\mathbf{v}$ on $(f|S)^{-1}(D)$ by 
$$\mathbf{v} := \theta_1 \mathbf{v}_1 + \theta_2 \mathbf{v}_2.$$
Then we can see that the following conditions hold:
\begin{itemize}
\item[(a6)] $\langle \mathbf{v}(x), {\nabla f(x)} \rangle = 1;$
\item[(a7)] $\langle \mathbf{v}(x), {\nabla g_j(x)} \rangle = 0$ for $j = 1, \ldots, p;$
\item[(a8)] $\langle \mathbf{v}(x), x \rangle = 0$ provided that $\|x\| \ge R_0 + \epsilon.$
\end{itemize}
Finally, integrating the vector field $\mathbf{v}$ we have that the restriction $f \colon (f|_S)^{-1}(D) \rightarrow D$ is a trivial $C^\infty$-fibration, which means that $c^* \not \in B(f|_S).$
\end{proof}

\begin{remark}{\rm
(i) The inclusion~\eqref{EQ01} provides an extension to {\em algebraic sets} of Theorem~1 in \cite{Nemethi1990}, where the case $S = \mathbb{C}^n$ was studied.

(ii) The inclusions~\eqref{EQ01} and \eqref{EQ02} may be strict in general, see \cite{Paunescu1997, Paunescu2000} and \cite{HaHV2008-1}.

(iii) The proof of Theorem~\ref{Theorem31} also implies the following inclusion, which was proved in \cite{Rabier1997, Jelonek2004, Jelonek2005},
\begin{eqnarray*}
B(f|_S) & \subset & K_0(f|_S) \cup K_\infty(f|_S).
\end{eqnarray*}

(iv) A straightforward modification shows that Lemma~\ref{Lemma31} and Theorem~\ref{Theorem31} still hold in the case where $S$ does not have the explicit form as it was assumed; in fact, it suffices to suppose that $S$ is a non-singular constructive subset of $\mathbb{C}^n.$ As we shall not use this ``improve'' statement, we leave the proof as an exercise.
}\end{remark}

Under the non-degeneracy condition of Definition~\ref{Definition21}, we obtain the following bound of tangency values at infinity of $f|_S$ in terms of critical values of certain polynomial functions.

\begin{theorem}\label{Theorem32}
Assume that the restriction $f|_S$ of $f$ on $S$ is Newton non-degenerate at infinity. Then
\begin{eqnarray*}
T_{\infty}(f|_S) &\subset& \Sigma_{\infty}(f|_S) \cup K_0(f|_S) \cup \{0\}.
\end{eqnarray*}
Moreover, if the polynomial $f  \colon \mathbb{C}^n \rightarrow \mathbb{C}$ is convenient, then $T_{\infty}(f|_S) = \emptyset.$
\end{theorem}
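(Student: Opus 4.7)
The plan is to take any $c\in T_\infty(f|_S)$ and analyze an analytic curve $\phi(s)$ produced by the Curve Selection Lemma at infinity, along which $\|\phi(s)\|\to\infty$, $f(\phi(s))\to c$, $g_j(\phi(s))\equiv 0$, and $\nabla f(\phi(s))+\sum_{j=1}^p\lambda_j(s)\nabla g_j(\phi(s))=\lambda_{p+1}(s)\phi(s)$ for analytic $\lambda_j(s)$. If $\lambda_{p+1}\equiv 0$, then $\phi(s)$ is a critical point of $f|_S$ for every $s$, so by the finiteness of $K_0(f|_S)$ and analyticity of $f\circ\phi$, the function $f\circ\phi$ is constant and $c\in K_0(f|_S)$.

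Assume henceforth $\lambda_{p+1}\not\equiv 0$. Write $\phi_i(s)=x_i^0 s^{q_i}+\mathrm{h.o.t.}$ for each $i\in I:=\{i:\phi_i\not\equiv 0\}$, and extend $q$ arbitrarily to $\mathbb{R}^n$. From $\|\phi(s)\|\to\infty$ it follows that $q_{\min}:=\min_{i\in I}q_i<0$. Put $J:=\{j:g_j|_{\mathbb{C}^I}\not\equiv 0\}$, $\Delta_0:=\Delta(q,\Gamma(f|_{\mathbb{C}^I}))$, $d_0:=d(q,\Gamma(f|_{\mathbb{C}^I}))$, and similarly $\Delta_j$, $d_j$ for $j\in J$. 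The Puiseux expansions $f(\phi(s))=f_{\Delta_0}(x^0)s^{d_0}+\mathrm{h.o.t.}$ and $g_j(\phi(s))=g_{j,\Delta_j}(x^0)s^{d_j}+\mathrm{h.o.t.}\equiv 0$ yield $g_{j,\Delta_j}(x^0)=0$ for every $j\in J$; the cases $f|_{\mathbb{C}^I}\equiv 0$ and $d_0>0$ both force $c=0$, so only $d_0\le 0$ remains.

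The heart of the proof is a rescaling argument producing a Lagrange relation at $x^0$. Combining the gradient identity with $(g_j\circ\phi)'\equiv 0$ gives $(f\circ\phi)'(s)=\overline{\lambda_{p+1}(s)}\sum_i\phi_i'(s)\overline{\phi_i(s)}$. Writing $f\circ\phi=c+c_1 s^{\gamma_1}+\cdots$ with $\gamma_1>0$ (the case $f\circ\phi\equiv c$ together with the identity would force $\lambda_{p+1}\equiv 0$, since the leading coefficient $\sum_{q_i=q_{\min}}q_i|x_i^0|^2$ of $\sum_i\phi_i'\overline{\phi_i}$ is a nonzero real number), matching orders on both sides gives the key estimate $\mathrm{ord}(\lambda_{p+1})+2q_{\min}>d_0$. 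Now rescale the $i$-th component of the gradient relation by $s^{q_i-d_0}$; setting $\mu_j(s):=\overline{\lambda_j(s)}s^{d_j-d_0}$ for $j\in J$, the relation becomes $s^{q_i-d_0}\partial_i f(\phi(s))+\sum_{j\in J}\mu_j(s)\,s^{q_i-d_j}\partial_i g_j(\phi(s))=\overline{\lambda_{p+1}(s)\phi_i(s)}\,s^{q_i-d_0}$, in which the RHS tends to $0$ (by the order bound) while the factors $s^{q_i-d_0}\partial_i f(\phi(s))$ and $s^{q_i-d_j}\partial_i g_j(\phi(s))$ tend to $\partial_i f_{\Delta_0}(x^0)$ and $\partial_i g_{j,\Delta_j}(x^0)$ respectively. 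If some $\mu_j$ were to blow up as $s\to 0^+$, dividing through by the fastest-diverging one and passing to the limit would produce a nontrivial $\mathbb{C}$-linear dependence among $\{\nabla g_{j,\Delta_j}(x^0)\}_{j\in J}$, contradicting part~(i) of the Newton non-degeneracy condition. Hence each $\mu_j$ has a finite limit $\tilde\mu_j$, the limit equation reads $\partial_i f_{\Delta_0}(x^0)+\sum_{j\in J}\tilde\mu_j\partial_i g_{j,\Delta_j}(x^0)=0$ for every $i\in I$, and setting $\tilde\lambda_j:=\overline{\tilde\mu_j}$ and conjugating produces $\nabla f_{\Delta_0}(x^0)+\sum_{j\in J}\tilde\lambda_j\nabla g_{j,\Delta_j}(x^0)=0$ in $\mathbb{C}^n$.

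If $d_0=0$ this places $c=f_{\Delta_0}(x^0)\in\Sigma_\infty(f|_S)$. If $d_0<0$ then also $f_{\Delta_0}(x^0)=0$ (else $|f\circ\phi|\to\infty$), so $x^0\in\{f_{\Delta_0}=0\}\cap\bigcap_{j\in J}\{g_{j,\Delta_j}=0\}\cap\mathbb{C}^{*I}$ carries the Lagrange relation just derived, contradicting part~(ii) of the non-degeneracy condition; this case is therefore impossible. For the \emph{moreover} assertion, convenience of $f$ forces $\Gamma(f|_{\mathbb{C}^I})$ to contain a point $M_{i^*}e_{i^*}$ on the axis $i^*\in I$ with $q_{i^*}=q_{\min}$, so $d_0\le M_{i^*}q_{\min}<0$; since $d_0<0$ was just ruled out (and the parallel argument handles the $\lambda_{p+1}\equiv 0$ subcase), no admissible curve $\phi$ exists and $T_\infty(f|_S)=\emptyset$. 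The main obstacle is the rescaling step: deriving the order bound $\mathrm{ord}(\lambda_{p+1})+2q_{\min}>d_0$ and then invoking part~(i) of the Newton non-degeneracy condition to ensure boundedness of the auxiliary multipliers $\mu_j(s)$ as $s\to 0^+$.
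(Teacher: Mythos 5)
Your proof is correct, and it follows the same broad strategy as the paper (Curve Selection Lemma at infinity, Puiseux expansions along the curve, extracting a Lagrange-type relation at the leading-coefficient point $x^0$, and invoking the two parts of the non-degeneracy hypothesis), but the bookkeeping in the central step differs in a genuinely interesting way. The paper treats $f=g_0$ on the same footing as the constraints: it introduces $J_1\subset\{0\}\cup J$, sets $\ell:=\min_{j\in J_1}(m_j+d_j)$ and $J_2:=\{j:\ell=m_j+d_j\}$, extracts the leading order $s^{\ell-q_i}$ from the gradient relation, and then proves $I_1:=\{i:\ell-q_i=m_{p+1}+q_i\}=\emptyset$ by summing the Euler relations for the $g_{j,\Delta_j}$, $j\in J_2$, against the known vanishing $g_{j,\Delta_j}(x^0)=0$ and $d_0g_{0,\Delta_0}(x^0)=0$. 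You instead keep $f$ separate, rescale the $i$-th gradient component by $s^{q_i-d_0}$, and control the two potential obstructions separately: the right-hand side $\overline{\lambda_{p+1}\phi_i}\,s^{q_i-d_0}$ is killed by the order bound $m_{p+1}+2q_{\min}=\gamma_1>0\geqslant d_0$ that you extract from differentiating $f\circ\phi$ (this recycles the $\lambda_{p+1}$-order computation that the paper itself uses in the proof of Theorem~\ref{Theorem31} but does not reuse here), while possible blow-up of the auxiliary multipliers $\mu_j=\overline{\lambda_j}\,s^{d_j-d_0}$ is ruled out by normalizing and invoking part~(i) of the non-degeneracy condition. That last step is exactly the paper's implicit ``$\ell<d_0$ is impossible'' argument, just phrased as a boundedness statement. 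Both routes reach the same limit equation $\nabla f_{\Delta_0}(x^0)+\sum_j\tilde\lambda_j\nabla g_{j,\Delta_j}(x^0)=0$ together with $g_{j,\Delta_j}(x^0)=0$, and then use part~(ii) to exclude $d_0<0$ and conclude $c\in\Sigma_\infty(f|_S)$ when $d_0=0$. Your version arguably streamlines the exclusion of $I_1$ by replacing the Euler-relation computation with the chain-rule identity; the paper's version has the aesthetic advantage of treating $g_0$ symmetrically and avoiding the need to argue $\mu_j$ boundedness as a separate lemma. One minor point worth noting when writing this up formally: the point $x^0$ as you construct it lies in $(\mathbb{C}^*)^n$ (with $x^0_i=1$ for $i\notin I$), whereas the definition of $\Sigma_\infty(f|_S)$ asks for $x\in\mathbb{C}^{*I}$; this is harmless since $f_{\Delta_0}$ and the $g_{j,\Delta_j}$ do not depend on $x_i$ for $i\notin I$, so one may replace those coordinates by $0$.
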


\begin{proof}
For convenience we will write $g_0$ instead of $f.$

Take arbitrary $c \in T_{\infty}(g_0|_S) \setminus (K_0(g_0|_S) \cup \{0\}).$ We will show that $c \in \Sigma_{\infty}(f|_S).$ Indeed, by definition, there exist sequences $\{x^k\}_{k \in \mathbb{N}} \subset \mathbb{C}^n$ and $\{\lambda^k_j\}_{k \in \mathbb{N}} \subset \mathbb{C}, j = 1, \ldots, p + 1,$ such that 
\begin{enumerate}
\item[(a1)] $\Vert x^k \Vert \rightarrow \infty$ as $k\rightarrow \infty$;
\item[(a2)] $g_0(x^k) \rightarrow c$ as $k \rightarrow \infty;$
\item[(a3)] $g_j(x^k) = 0$ for all $j=1,\ldots,p,$ and all $k \in \mathbb{N};$
\item[(a4)] $\nabla g_0(x^k) + \sum_{j=1}^p \lambda_j^k \nabla g_j(x^k) = \lambda_{p+1}^k {x^k}$ for all $k \in \mathbb{N}.$
\end{enumerate} 
By the Curve Selection Lemma at infinity (see \cite{Nemethi1992} or \cite{HaHV2017}), there exist analytic curves
\begin{eqnarray*}
\phi \colon (0,\epsilon) \rightarrow \mathbb{C}^n \quad \textrm{ and } \quad  \lambda_j \colon (0,\epsilon) \rightarrow \mathbb{C}, \ j =  1, \ldots, p + 1,
\end{eqnarray*}
such that
\begin{enumerate}
\item[(a5)] $\Vert \phi(s) \Vert \rightarrow \infty$ as $ s\rightarrow 0;$
\item[(a6)] $g_0(\phi(s)) \rightarrow c$ as $s\rightarrow 0$;
\item[(a7)] $g_j(\phi(s)) = 0$ for all $j=1,\ldots,p,$ and all $s \in (0,\epsilon);$
\item[(a8)] $\nabla g_0(\phi(s)) + \sum_{j=1}^p \lambda_j(s) \nabla g_j(\phi(s)) = \lambda_{p+1}(s) {\phi(s)}\textrm{ for } s\in (0,\epsilon).$
\end{enumerate}

Put $I:=\{ i \ : \ \phi_i \not\equiv 0 \}$. By the condition (a5), $I \neq \emptyset$. For $i \in I$, we can write the curve $\phi_i$ in terms of parameter, say
$$\phi_i(s) \ =\ x^0_i s^{q_i} + \textrm{higher-order terms in }s,$$
where $x_i^0 \neq 0$ and $q_i \in \mathbb{Q}.$ We have $\min_{i \in I} q_i < 0,$ because of the condition (a5).

If $\lambda_{p+1}\equiv 0,$ then it follows from the conditions (a7) and (a8) that
\begin{eqnarray*}
\frac{d}{ds}(g_0\circ\phi)(s) 
\ = \ \left \langle {\frac{d\phi(s)}{ds}}, \nabla g_0(\phi(s)) \right \rangle & = &  -\sum_{j=1}^p \overline{\lambda_j(s)} \left  \langle {\frac{d\phi (s)}{ds}}, \nabla g_j(\phi(s)) \right \rangle \\ 
& = &  -\sum_{j=1}^p \overline{\lambda_j(s)} \frac{d}{ds}(g_j\circ\phi)(s) \ = \ 0.
\end{eqnarray*}
Consequently, $g_0(\phi(s))=c$ for $ s \in (0,\epsilon),$ and so $c \in K_0(g_0|_S)$, which is  a contradiction. Therefore, $\lambda_{p+1}\not\equiv 0.$ 
Put $J:= \{ j\in \{ 1, \ldots, p\} \ : \ \lambda_j \not\equiv 0 \}.$ For $j \in J \cup \{p+1\}$, we can write
\begin{eqnarray*}
\lambda_j(s) \ =\ c_j s^{m_j} +  \textrm{higher-order terms in }s ,
\end{eqnarray*}
where $c_j \neq 0$ and $m_j \in \mathbb{Q}.$ 

Put $J_1:= \{ j\in \{ 0\} \cup J \ : \ g_j|_{\mathbb{C}^I} \not\equiv 0\}$. The condition (a6) and the assumption that $c\neq 0$ together imply that $0\in J_1,$ and so $J_1\neq \emptyset.$ For each $j \in J_1,$ let $d_j$ be the minimal value of the linear function $\sum_{i \in I}\alpha_i q_i$ on $ \mathbb{R}^I \cap \Gamma(g_j)$ and $\Delta_j$ be the maximal face of $\mathbb{R}^I \cap \Gamma(g_j)$, where this linear function takes its minimum value, respectively. A simple calculation shows that
\begin{eqnarray*}
g_j(\phi(s)) &=& g_{j,\Delta_j}(x^0) s^{d_j} + \textrm{ higher-order terms in }s,
\end{eqnarray*}
where $x^0 := (x^0_1, \ldots, x^0_n)$ with $x^0_i = 1$ for $i \not \in I$ and $g_{j,\Delta_j}$ is the face function associated with $g_j$ and $\Delta_j.$ The condition (a6) and the assumption that $c\neq 0$ together imply that
\begin{eqnarray} \label{EQ03}
d_0 \leqslant 0 \quad \textrm{ and } \quad d_0 g_{0,\Delta_0}(x^0) = 0.
\end{eqnarray}
Furthermore, it follows from the condition (a7) that
\begin{eqnarray} \label{EQ04}
g_{j,\Delta_j}(x^0) & = & 0 \quad \textrm{ for all } \quad j \in J_1 \setminus \{0\}.
\end{eqnarray}

On the other hand, we have for all $i \in I$ and all $j \in J_1,$
\begin{eqnarray*}
\frac{\partial g_j(\phi(s))}{\partial x_i}  &=& \frac{\partial g_{j,\Delta_j}}{\partial x_i} (x^0) s^{d_j - q_i} + \textrm{ higher-order terms in }s.
\end{eqnarray*}
Combined with the condition (a8), this equation implies that for all $i \in I,$
\begin{eqnarray*}
\left( \sum_{j \in J_2} \overline{c_j} \frac{\partial g_{j,\Delta_j}}{\partial x_i}(x^0)\right) s^{\ell-q_i} + \cdots &  = & \overline{c_{p+1}} \overline{ x^0_i}s^{m_{p+1}+q_i} + \cdots,
\end{eqnarray*}
where $c_0 :=1 , m_0 := 0, \ell := \min\{m_j+d_j \ : \ j \in J_1\},$  $J_2:= \{ j \in J_1 \ : \ \ell  = m_j + d_j \},$ and the dots stand for the higher-order terms in $s.$
Clearly, $\ell - q_i \leqslant m_{p+1} + q_i$ for all $i \in I.$ Therefore,
\begin{eqnarray} \label{EQ05}
\ell - m_{p+1} & \leqslant  & 2 \min_{i\in I}q_i  \ < \ 0.
\end{eqnarray}

We next show that the set $I_1:= \{i \in I \ : \ \ell-q_i=m_{p+1}+ q_i\}$ is empty. To see this, we observe that
\begin{eqnarray*}
\sum_{j \in J_2} \overline{c_j} \frac{\partial g_{j,\Delta_j}}{\partial x_i}(x^0) & = & 
\begin{cases}
\overline{c_{p+1}} \overline{x^0_i} & \textrm{ if } i \in I_1, \\
0  & \textrm{ if } i \in I \setminus I_1, \\
0  & \textrm{ if } i \not \in I,
\end{cases}
\end{eqnarray*}
where the last equation holds because for all $i \not \in I$ and all $j \in J_2,$ the polynomial $g_{j, \Delta_j}$ does not depend on the variable $x_i.$
Consequently,
\begin{eqnarray*}
\sum_{i = 1}^n \left(\sum_{j \in J_2} \overline{c_j} \frac{\partial g_{j,\Delta_j}}{\partial x_i}(x^0)\right) x^0_i q_i & = & \nonumber
\sum_{i \in I_1} \left(\sum_{j \in J_2} \overline{c_j} \frac{\partial g_{j,\Delta_j}}{\partial x_i}(x^0)\right) x^0_i q_i \\
& = & \sum_{i\in I_1} \overline{c_{p+1}}|x^0_i|^2 \frac{\ell-m_{p+1}}{2}.
\end{eqnarray*}

On the other hand, by the Euler relation, we have for all $j \in J_2,$
\begin{eqnarray*}
\sum_{i = 1}^n  \frac{\partial g_{j,\Delta_j}}{\partial x_i}(x^0) x^0_i q_i & = &  d_j g_{j,\Delta_j}(x^0).
\end{eqnarray*}
It follows that
\begin{eqnarray*}
\sum_{i = 1}^n \left(\sum_{j \in J_2} \overline{c_j} \frac{\partial g_{j,\Delta_j}}{\partial x_i}(x^0)\right) x^0_i q_i 
& = &  \sum_{j \in J_2} \overline{c_j} \left( \sum_{i = 1}^n  \frac{\partial g_{j,\Delta_j}}{\partial x_i}(x^0) x^0_i q_i \right) \\
& = &  \sum_{j \in J_2} \overline{c_j} d_j g_{j,\Delta_j}(x^0).
\end{eqnarray*}
Therefore,
\begin{eqnarray*}
\sum_{i\in I_1} \overline{c_{p+1}} |x^0_i|^2 \frac{\ell-m_{p+1}}{2} & = & \sum_{j \in J_2} \overline{c_j} d_j g_{j,\Delta_j}(x^0).
\end{eqnarray*}
This, together with \eqref{EQ03}, \eqref{EQ04}, and \eqref{EQ05}, gives $I_1 = \emptyset.$ Thus, 
\begin{eqnarray*}
\sum_{j \in J_2} \overline{c_j} \frac{\partial g_{j,\Delta_j}}{\partial x_i}(x^0) & = & 0 \quad \textrm{ for all } \quad i = 1,\ldots,n.
\end{eqnarray*}
Since the restriction of $g_0$ on $S$ is Newton non-degenerate at infinity, we deduce easily from \eqref{EQ03} and \eqref{EQ04} that $d_0 = 0$ and $0 \in J_2,$ hence that $c = g_{0, \Delta_0}(x^0) \in \Sigma_\infty(g_0|_S).$

Finally, assume that the polynomial $f  \colon \mathbb{C}^n \rightarrow \mathbb{C}$ is convenient. Then $d_0 < 0,$ which is a contradiction. Hence $T_\infty(f|_S) = \emptyset.$
\end{proof}
 
For $S = \mathbb{C}^n,$ the next statement was shown in \cite[Theorem~2]{Nemethi1990}.

\begin{corollary}\label{Corrolary31}
Under the assumption of Theorem~\ref{Theorem32}, we have
\begin{eqnarray*}
B(f|_S)  &\subset& \Sigma_{\infty}(f|_S) \cup K_0(f|_S) \cup \{0\}.
\end{eqnarray*}
Moreover, if the polynomial $f  \colon \mathbb{C}^n \rightarrow \mathbb{C}$ is convenient, then $B(f|_S) = K_0(f|_S).$
\end{corollary}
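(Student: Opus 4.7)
The plan is to obtain the corollary as an almost immediate consequence of the two theorems established above, with essentially no new analysis required.

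First I would apply Theorem~\ref{Theorem31}, which gives unconditionally that
$$B(f|_S) \ \subset \ K_0(f|_S) \cup T_\infty(f|_S).$$
Then, using the hypothesis that $f|_S$ is Newton non-degenerate at infinity, I would invoke Theorem~\ref{Theorem32} to replace the tangency set by an explicit finite set, namely
$$T_\infty(f|_S) \ \subset \ \Sigma_\infty(f|_S) \cup K_0(f|_S) \cup \{0\}.$$
Chaining the two inclusions and absorbing the duplicate $K_0(f|_S)$ yields the first claim of the corollary.

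For the second assertion, when $f$ is convenient, Theorem~\ref{Theorem32} gives the stronger conclusion $T_\infty(f|_S) = \emptyset$. Plugging this into the inclusion from Theorem~\ref{Theorem31} produces $B(f|_S) \subset K_0(f|_S)$. The reverse inclusion $K_0(f|_S) \subset B(f|_S)$ is the standard fact (already recalled in the paragraph preceding Definition~3.1, or rather the definition of $T_\infty(f|_S)$) that critical values of $f|_S$ always obstruct local triviality; this is a classical consequence of Ehresmann-type arguments, so no additional work is needed. Combining the two directions gives the equality $B(f|_S) = K_0(f|_S)$.

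There is no real obstacle here; the corollary is a formal combination of the two theorems, and the only item that might warrant a line of explanation is the reverse inclusion $K_0(f|_S) \subset B(f|_S)$ in the convenient case, which is standard.
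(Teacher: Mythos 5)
Your proof is correct and follows exactly the paper's route: chain Theorem~\ref{Theorem31} with Theorem~\ref{Theorem32}, and for the convenient case use $T_\infty(f|_S)=\emptyset$ together with the standard inclusion $K_0(f|_S)\subset B(f|_S)$. This matches the paper's one-line proof, which cites precisely these ingredients.
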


\begin{proof}
This is an immediate consequence of Theorems~\ref{Theorem31}~and~\ref{Theorem32} and the fact that the bifurcation set $B(f|_S)$ contains the set $K_0(f|_S)$ of critical values of the restriction $f|_S.$
\end{proof}

\section{The stability of global monodromies} \label{Section4}

Recall that the (non-singular) algebraic set $S$ is given by
$$S := \{x \in \mathbb{C}^n \ : \ g_1(x)  = 0, \ldots, g_p(x)  = 0\}.$$
In what follows, let $f(t, x)$ be a polynomial in $x \in \mathbb{C}^n$ with coefficients which are smooth (i.e., $C^\infty$) complex valued functions of $t \in [0, 1].$ We will write $f_t(x) := f(t, x)$ and assume that for each $t \in [0, 1],$ the restriction $f_t|_S \colon S \rightarrow \mathbb{C}$ is dominant (i.e., the image set $f_t (S)$ is dense in $\mathbb{C}$). With these preparations, we have the following stability result, which generalizes \cite[Theorem~17]{Nemethi1992} and \cite[Theorem~1.1]{Phamts2010}.

\begin{theorem} \label{Theorem41}
Let the following conditions are satisfied: 
\begin{enumerate}
\item[(i)] The Newton polyhedron of ${f}_t$ is independent of $t;$
\item[(ii)] For each $t \in [0, 1],$ the restriction $f_t|_S$ is Newton non-degenerate at infinity.
\end{enumerate}   
Then the global monodromies of the $f_t|_S$ are all isomorphic.
\end{theorem}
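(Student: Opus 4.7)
The plan is to apply a standard parametric trivialization argument: find $r_0 > 0$ such that $B(f_t|_S) \subset D_{r_0}$ for every $t$, then construct a smooth family of fiber-bundle equivalences
\[
\Phi_t \colon (f_0|_S)^{-1}(\mathbb{S}^1_{r_0}) \ \longrightarrow \ (f_t|_S)^{-1}(\mathbb{S}^1_{r_0})
\]
commuting with the projections onto $\mathbb{S}^1_{r_0}$; the isomorphism of global monodromies will follow immediately.

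First I would promote Corollary~\ref{Corrolary31} to a uniform statement. Since $\Gamma(f_t)$ is independent of $t$, the face polynomials $f_{t, \Delta_0}$ share a fixed support and their coefficients depend smoothly on $t$, and the Newton non-degeneracy of each $f_t|_S$ ensures that $\Sigma_\infty(f_t|_S)$ is finite by Bertini--Sard. A parameterized version of the argument defining $\Sigma_\infty$, combined with the Curve Selection Lemma at infinity applied in the total space $[0, 1] \times \mathbb{C}^n$, shows that $\bigcup_{t \in [0, 1]} \Sigma_\infty(f_t|_S)$ is bounded in $\mathbb{C}$; an identical argument bounds $\bigcup_{t \in [0, 1]} K_0(f_t|_S)$. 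By Corollary~\ref{Corrolary31} one obtains $r_0 > 0$ with $B(f_t|_S) \subset D_{r_0}$ for all $t \in [0, 1]$. A parameterized version of Lemma~\ref{Lemma31} together with the transversality construction inside the proof of Theorem~\ref{Theorem31} then yields $R_0 > 0$ such that, for every $t \in [0, 1]$, every $R \geqslant R_0$, and every $c$ with $r_0 \leqslant |c| \leqslant r_0 + 1$, the fiber $(f_t|_S)^{-1}(c)$ is smooth, meets $\mathbb{S}^{2n-1}_R$ transversally, and the vectors $\nabla f_t(x), \nabla g_1(x), \ldots, \nabla g_p(x), x$ are $\mathbb{C}$-linearly independent at every such point; one argues by contradiction, extracting a limit parameter $t_* \in [0,1]$ from a failing sequence and applying the Curve Selection Lemma at infinity to $f_{t_*}|_S$.

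With these uniform estimates in hand, I would construct a smooth vector field $\mathbf{v}$ on a neighborhood of
\[
M \ := \ \{(t, x) \in [0, 1] \times \mathbb{C}^n \ : \ x \in S, \ |f_t(x)| = r_0\}
\]
of the form $\mathbf{v}(t, x) = \partial/\partial t + \mathbf{w}_t(x)$, where $\mathbf{w}_t$ is tangent to $S$, satisfies $\langle \mathbf{w}_t(x), \nabla f_t(x) \rangle = -{\partial f_t}/{\partial t}(x)$ so that the flow preserves the value of $f_t$, and, on $\{\|x\| \geqslant R_0 + 1\}$, additionally satisfies $\langle \mathbf{w}_t(x), x \rangle = 0$ so that the flow preserves the norm. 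Such $\mathbf{w}_t$ exists locally because of the $\mathbb{C}$-linear independence established above (the constraints reduce to a consistent inhomogeneous linear system in the components of $\mathbf{w}_t$), and is assembled by a two-chart partition of unity exactly as in the proof of Theorem~\ref{Theorem31}. Integrating $\mathbf{v}$ from $t = 0$ to $t = 1$ produces the family $\Phi_t$ above, commuting with the projections onto $\mathbb{S}^1_{r_0}$, and hence the global monodromies of the $f_t|_S$ are all isomorphic.

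The main obstacle is the uniform bound in Step~1, namely passing from the individual finiteness of $\Sigma_\infty(f_t|_S)$ to a uniform bound on $\bigcup_{t \in [0,1]} \Sigma_\infty(f_t|_S)$. The non-compactness of the torus $\mathbb{C}^{*n}$, together with the combinatorial nature of the construction (indexed by the faces $\Delta_0$, $\Delta_j$ and the covector $q$), forces one to work in the total space $[0, 1] \times \mathbb{C}^n$ and apply the Curve Selection Lemma at infinity; a hypothetical escaping sequence of such critical values would produce a limit parameter $t_* \in [0, 1]$ contradicting the Newton non-degeneracy of $f_{t_*}|_S$ guaranteed by hypothesis~(ii).
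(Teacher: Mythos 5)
Your proposal follows essentially the same route as the paper: uniform bounds on $K_0(f_t|_S)$ and $\Sigma_\infty(f_t|_S)$ obtained via the Curve Selection Lemma at infinity in the total space $[0,1]\times\mathbb{C}^n$, a uniform transversality statement for the fibers against large spheres near $\mathbb{S}^1_r$, and a parametric vector-field trivialization, corresponding exactly to Lemmas~\ref{Lemma41}, \ref{Lemma42}, \ref{Lemma43} and the concluding integration. One cautionary remark: the transversality step is not obtained simply by parameterizing Lemma~\ref{Lemma31} and invoking the observation in the proof of Theorem~\ref{Theorem31} --- in the paper it is its own Newton-polyhedron case analysis (with an extra Lagrange multiplier $\lambda_{p+1}$ for the sphere constraint), and the contradiction comes not from applying the Curve Selection Lemma to $f_{t_*}|_S$ alone but from a curve in the total space together with the already-established bound on $\Sigma_\infty(f_{t_0}|_S)$.
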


The proof of Theorem~\ref{Theorem41} will be divided into several steps, which, for convenience, will be called lemmas.

\begin{lemma}[Boundedness of affine singularities]\label{Lemma41}
There exists a real number $r > 0$ such that 
\begin{eqnarray*}
K_0(f_t|_S) & \subset & D_r \quad \textrm{ for all } \quad t\in [0, 1].
\end{eqnarray*}
\end{lemma}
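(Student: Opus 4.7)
The plan is to argue by contradiction: suppose there is no such $r$, so that one finds sequences $t_k \in [0,1]$ and $c_k \in K_0(f_{t_k}|_S)$ with $|c_k| \to \infty.$ Extracting a subsequence with $t_k \to t^* \in [0,1]$ and choosing corresponding witnesses $x^k \in S$ and $\lambda_j^k \in \mathbb{C}$ satisfying $f_{t_k}(x^k) = c_k,$ $g_j(x^k) = 0,$ and $\nabla f_{t_k}(x^k) + \sum_j \lambda_j^k \nabla g_j(x^k) = 0,$ I would first observe that since the coefficients $a_\alpha(t)$ of $f_t$ are continuous on the compact interval $[0,1]$ they are uniformly bounded; hence $f_t$ is uniformly bounded on every bounded subset of $\mathbb{C}^n,$ and the divergence $|c_k| \to \infty$ forces $\|x^k\| \to \infty.$ I would then invoke a parametric Curve Selection Lemma at infinity (reducing, if necessary, to the case in which $a_\alpha(t)$ is real-analytic in $t$ on a small interval near $t^*$) to produce real-analytic arcs $t \colon (0,\epsilon) \to [0,1],$ $\phi \colon (0,\epsilon) \to S,$ and $\lambda_j \colon (0,\epsilon) \to \mathbb{C}$ satisfying $t(s) \to t^*,$ $\|\phi(s)\| \to \infty,$ $|f_{t(s)}(\phi(s))| \to \infty$ as $s \to 0^+,$ and $\nabla f_{t(s)}(\phi(s)) + \sum_j \lambda_j(s) \nabla g_j(\phi(s)) = 0$ on $(0,\epsilon).$

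The decisive structural feature, distinct from the proof of Theorem~\ref{Theorem31}, is that there is \emph{no} radial ``$\lambda_{p+1}\phi$'' term, so that the identity $(d/ds)(g_j \circ \phi) \equiv 0$ combined with the critical-point relation yields $\langle d\phi/ds,\, \nabla f_{t(s)}(\phi(s))\rangle = 0$ identically on $(0,\epsilon).$ I would then replay the Newton-polyhedron expansion of the proof of Theorem~\ref{Theorem32}: set $I := \{i : \phi_i \not\equiv 0\},$ write $\phi_i(s) = x_i^0 s^{q_i} + \cdots$ and $\lambda_j(s) = c_j s^{m_j} + \cdots,$ and, exploiting the hypothesis that $\Gamma(f_t)$ is independent of $t,$ form the faces $\Delta_0 := \Delta(q, \Gamma(f|_{\mathbb{C}^I}))$ and $\Delta_j := \Delta(q, \Gamma(g_j|_{\mathbb{C}^I})).$ The continuity of the $a_\alpha(t)$ together with $t(s) \to t^*$ identifies the leading coefficient of the Puiseux expansion of $f_{t(s)}(\phi(s))$ with $f_{t^*,\Delta_0}(x^0),$ and similarly for the $g_j$ and for all gradients, while the divergence $|f_{t(s)}(\phi(s))| \to \infty$ now forces $d_0 := d(q, \Gamma(f|_{\mathbb{C}^I})) < 0$ \emph{strictly}, in contrast to the non-strict bound $d_0 \leqslant 0$ obtained in Theorem~\ref{Theorem32}.

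Balancing leading orders in the critical-point relation---now without the $\lambda_{p+1}$ contribution---then yields, at the point $x^0 \in \mathbb{C}^{*I},$ the system $f_{t^*,\Delta_0}(x^0) = 0,$ $g_{j,\Delta_j}(x^0) = 0$ for $j \in J,$ together with a nontrivial $\mathbb{C}$-linear dependence among the gradients $\nabla f_{t^*,\Delta_0}(x^0)$ and $\nabla g_{j,\Delta_j}(x^0).$ This contradicts Definition~\ref{Definition21}(ii) applied to $f_{t^*}|_S,$ which is Newton non-degenerate at infinity by hypothesis, and thereby completes the argument. The principal obstacle I anticipate is the rigorous setup of the parametric Curve Selection Lemma at infinity under only $C^\infty$-dependence on $t$; a natural workaround is either to approximate the coefficients by real-analytic functions on a neighbourhood of $t^*,$ or to operate inside an o-minimal/subanalytic framework in which the ``arc at infinity'' extraction remains valid.
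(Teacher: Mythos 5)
Your proposal follows the same route as the paper's proof of Lemma~\ref{Lemma41}: argue by contradiction, deduce $\|x^k\|\to\infty$, invoke the Curve Selection Lemma at infinity, pass to Newton--Puiseux expansions with $I:=\{i:\phi_i\not\equiv 0\}$, observe that the absence of a radial $\lambda_{p+1}\phi$ term and the divergence of $f_{t(s)}(\phi(s))$ force $d_0<0$ strictly, and then derive a contradiction with Newton non-degeneracy via the Euler relation. Your concern about the $C^\infty$-dependence on $t$ in the Curve Selection Lemma is not a defect of your write-up specifically: the paper invokes the same lemma citing \cite{Nemethi1992, HaHV2017} without further elaboration.

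One inaccuracy, though, in your final paragraph: you claim that balancing leading orders always produces $f_{t^*,\Delta_0}(x^0)=0$ together with a nontrivial linear dependence involving $\nabla f_{t^*,\Delta_0}(x^0)$, and that the contradiction is always with Definition~\ref{Definition21}(ii). This glosses over the case analysis on $\ell:=\min_{j\in J_1}(m_j+d_j)$ versus $d_0$ that the paper carries out. If $\ell<d_0$, the leading-order balance couples only the $\nabla g_{j,\Delta_j}(x^0)$ for $j\in J_2$; one does \emph{not} obtain $f_{t^*,\Delta_0}(x^0)=0$ (and indeed that vanishing is not deduced and not needed), and the contradiction is with Definition~\ref{Definition21}(i), not (ii). If $\ell\geqslant d_0$, then $\nabla f_{t^*,\Delta_0}(x^0)$ does enter (alone when $\ell>d_0$, together with the $g$-gradients when $\ell=d_0$), and only then does the Euler relation combined with $d_0<0$ and $g_{j,\Delta_j}(x^0)=0$ force $f_{t^*,\Delta_0}(x^0)=0$, contradicting Definition~\ref{Definition21}(ii). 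You also need the cases $J=\emptyset$ and $J_1=\emptyset$ (which the paper treats first, again via Euler) before the $\ell$ trichotomy makes sense. Once this case structure is restored, your argument is the paper's argument.
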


\begin{proof}
Suppose the lemma were false. Then by the Curve Selection Lemma at infinity (see \cite{Nemethi1992} or \cite{HaHV2017}), there exist analytic curves 
\begin{eqnarray*}
\phi \colon (0,\epsilon) \rightarrow \mathbb{C}^n, \quad 
t \colon (0,\epsilon) \rightarrow [0, 1], \quad \textrm{ and } \quad 
\lambda_j \colon (0,\epsilon) \rightarrow \mathbb{C}, j = 1, \ldots, p,
\end{eqnarray*}
such that
\begin{enumerate}
\item[(a1)] $\Vert \phi(s) \Vert\rightarrow \infty$ as $s\rightarrow 0$;
\item[(a2)] $t(s)\rightarrow t_0 \in [0, 1]$ as $s\rightarrow 0$;
\item[(a3)] $f_{t(s)}(\phi(s))\rightarrow \infty$ as $s\rightarrow 0$;
\item[(a4)] $g_j(\phi(s))=0$ for all $j=1,\ldots,p,$ and all $s\in (0,\epsilon)$;
\item[(a5)] $\nabla f_{t(s)}(\phi(s)) + \sum_{j=1}^p \lambda_j(s)\nabla g_j(\phi(s))= 0\textrm{ for } s\in (0,\epsilon).$
\end{enumerate}

Put $I:=\{ i \ : \ \phi_i \not\equiv 0 \}$. By the condition (a1), $I \neq \emptyset$. For $i\in I$, we can write the curve $\phi_i$ in terms of parameter, say
\begin{eqnarray*}
\phi_i(s) = x^0_i s^{q_i} + \textrm{higher-order terms in }s,
\end{eqnarray*} 
where  $x_i^0 \neq 0$ and $q_i \in \mathbb{Q}.$ Observe that $\min_{i \in I} q_i <0$ because of the condition (a1).

Recall from our assumptions that the Newton polyhedron $\Gamma(f_{t})$ of $f_{t}$ does not depend on $t.$ By the condition~(a3), $\mathbb{R}^I \cap \Gamma(f_{t})\neq \emptyset.$ Let $d_0$ be the minimal value of the linear function $\sum_{i \in I}\alpha_i q_i$ on $ \mathbb{R}^I \cap \Gamma(f_{t})$ and $\Delta_0$ be the maximal face of $\mathbb{R}^I \cap \Gamma(f_{t})$ where this linear function takes its minimum value. We can write
\begin{eqnarray*}
f_{t(s)}(\phi(s)) &=& f_{t_0,\Delta_0}(x^0) s^{d_0} + \textrm{ higher-order terms in }s,\\
\frac{\partial f_{t(s)}}{\partial x_i}(\phi(s)) &=& \frac{\partial f_{t_0,\Delta_0}}{\partial x_i}(x^0)s^{d_0 - q_i} + \textrm{ higher-order terms in } s
\quad \textrm{ for all } i \in I, 
\end{eqnarray*}
where $x^0 := (x^0_1, \ldots, x^0_n)$ with $x^0_i = 1$ for $i \not \in I$ and $f_{t_0,\Delta_0}$ denotes the face function corresponding to $f_{t_0}$ and $\Delta_0.$ By the condition (a3), $d_0 < 0.$ Furthermore, for $i \notin I$, the function $f_{t_0,\Delta_0}$ does not depend on the variable $x_i$, and so
\begin{eqnarray}\label{EQ06}
\frac{\partial f_{t_0,\Delta_0}}{\partial x_i}(x^0) & = & 0 \quad \textrm{ for all } \quad i \not \in I.
\end{eqnarray}

Put $J := \{ j\in \{1,\ldots,p\} \ : \ \lambda_j \not\equiv 0 \}.$ If $J = \emptyset$, then from the condition (a5) we deduce for all $i \in I$ that
$\frac{\partial f_{t(s)}}{\partial x_i}(\phi(s)) = 0,$ and hence that $\frac{\partial f_{t_0,\Delta_0}}{\partial x_i}(x^0) = 0.$ It turns out from \eqref{EQ06}, the Euler relation, and the inequality $d_0 < 0$ that $f_{t_0,\Delta_0}(x^0)  = 0,$ which contradicts the non-degeneracy condition. Therefore, $J \neq \emptyset.$ For $j\in J$, we can write
$$\lambda_j(s) = c_js^{m_j}+\textrm{ higher-order terms in }s,$$
where $c_j\neq 0$ and $m_j \in \mathbb{Q}.$

Put $J_1 := \{ j\in J \ : \ g_j|_{\mathbb{C}^I} \not\equiv 0\}.$ If $J_1 = \emptyset,$ then
\begin{eqnarray*}
\frac{\partial g_j}{\partial x_i}(\phi(s)) \ \equiv\ 0 \quad \textrm{ for all } \quad i \in I \textrm{ and  all } j \in J.
\end{eqnarray*}
We deduce from the condition (a5) that
\begin{eqnarray*}
\frac{\partial f_{t(s)}}{\partial x_i}(\phi(s)) \ \equiv\ 0 \quad \textrm{ for all } \quad i \in I.
\end{eqnarray*}
Consequently, 
\begin{eqnarray*}
\frac{\partial f_{t_0,\Delta_0}}{\partial x_i}(x^0)\ =\ 0 \quad \textrm{ for all } \quad i \in I.
\end{eqnarray*}
It follows from \eqref{EQ06}, the Euler relation, and the inequality $d_0 < 0$ that $f_{t_0,\Delta_0}(x^0)  = 0,$ 
which contradicts the non-degeneracy condition. Hence $J_1\neq \emptyset$. For each $j\in J_1$, let $d_j$ be the minimal value of the linear function $\sum_{i \in I}\alpha_i q_i$ on $ \mathbb{R}^I \cap \Gamma(g_j)$ and $\Delta_j$ be the maximal face of $\mathbb{R}^I \cap \Gamma(g_j)$ where this linear function takes its minimum value. We can write
$$g_j(\phi(s))\ =\ g_{j,\Delta_j}(x^0) s^{d_j} + \textrm{ higher-order terms in }s,$$
where $g_{j,\Delta_j}$ is the face function associated with $g_j$ and $\Delta_j.$ By the condition (a4), then
\begin{eqnarray}\label{EQ07}
 g_{j,\Delta_j}(x^0) & = & 0 \quad \textrm{ for all } \quad j \in J_1.
\end{eqnarray}
On the other hand, for $i\in I$ and $j\in J_1$,
$$\frac{\partial g_j}{\partial x_i}(\phi(s)) \ =\ \frac{\partial g_{j,\Delta_j}}{\partial x_i} (x^0) s^{d_j - q_i} + \textrm{ higher-order terms in }s.$$
For $i\notin I$ and $j\in J_1$, the function $g_{j,\Delta_j}$ does not depend on the variable $x_i$, and hence,
\begin{eqnarray}\label{EQ08}
 \frac{\partial g_{j,\Delta_j}}{\partial x_i}(x_0)\ =\ 0.
\end{eqnarray}
The condition (a5) implies that for all $i\in I$,
\begin{eqnarray}\label{EQ09}
\frac{\partial f_{t_0,\Delta_0}}{\partial x_i}(x^0)s^{d_0-q_i} + \cdots + \sum_{j \in J_2} \overline{c_j} \frac{\partial g_{j,\Delta_j}}{\partial x_i}(x^0) s^{\ell-q_i} + \cdots\ =\ 0,
\end{eqnarray}
where $\ell:= \min_{j\in J_1}(m_j+d_j)$, $J_2:=\{j \in J_1 \ : \ \ell\ =\ m_j+d_j\}$ and the dots stand for the higher-order terms in $s$.
 There are three cases to be considered.

\subsubsection*{Case 1:} $\ell\ >\ d_0$. By \eqref{EQ06} and \eqref{EQ09}, we have
$$\frac{\partial f_{t_0,\Delta_0}}{\partial x_i}(x^0)\ =\ 0 \quad \textrm{ for }\quad i=1,\ldots,n.$$
This, together with the Euler relation, implies that
$$d_0 f_{t_0,\Delta_0}\ =\ 0.$$
Hence, $f_{t_0,\Delta_0}(x^0) = 0$ because of $d_0 < 0.$ This contradicts the non-degeneracy condition.

\subsubsection*{Case 2:} $\ell\ =\ d_0$. We deduce from \eqref{EQ06}, \eqref{EQ08} and \eqref{EQ09} that
$$\frac{\partial f_{t_0,\Delta_0}}{\partial x_i}(x^0) + \sum_{j\in J_2} \overline{c_j} \frac{\partial g_{j,\Delta_j}}{\partial x_i}(x^0)\ =\ 0 \quad\textrm{  for  }\quad i=1,\ldots,n.$$
Consequently,
\begin{eqnarray*}
0 
& = & \sum_{i = 1}^n q_i x^0_i \frac{\partial f_{t_0,\Delta_0}}{\partial x_i}(x^0)+\sum_{i = 1}^n \sum_{j\in J_2} \overline{c_j} q_i x^0_i \frac{\partial g_{j,\Delta_j}}{\partial x_i}(x^0) \\
& = & \sum_{i = 1}^n  q_i x^0_i \frac{\partial f_{t_0,\Delta_0}}{\partial x_i}(x^0) + \sum_{j\in J_2} \overline{c_j} \sum_{i = 1}^n q_i x^0_i \frac{\partial g_{j,\Delta_j}}{\partial x_i}(x^0) \\
& = & d_0 f_{t_0,\Delta_0}(x^0)+ \sum_{j\in J_2} \overline{c_j} d_j g_{j,\Delta_j}(x^0) \\
& = & d_0 f_{t_0,\Delta_0}(x^0),
\end{eqnarray*}
where the last equation follows from \eqref{EQ07}. Since $d_0 < 0,$ we get $f_{t_0, \Delta_0}(x^0) = 0,$ which contradicts the non-degeneracy condition.

\subsubsection*{Case 3:} $\ell\ <\ d_0$. By \eqref{EQ08} and \eqref{EQ09}, we obtain
$$ \sum_{j\in J_2} \overline{c_j} \frac{\partial g_{j,\Delta_j}}{\partial x_i}(x^0)= 0 \quad\textrm{  for  }\quad i=1,\ldots,n.$$
This fact and \eqref{EQ07} combined give a contradiction with the non-degeneracy condition.
\end{proof}

\begin{lemma}[Boundedness of singularities at infinity]\label{Lemma42}
There exists a real number $r > 0$ such that
\begin{eqnarray*}
\Sigma_\infty(f_{t}|_S) & \subset & D_r \quad \textrm{ for all }\quad t \in [0, 1].
\end{eqnarray*}
\end{lemma}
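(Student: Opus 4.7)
Argue by contradiction, paralleling the proof of Lemma~\ref{Lemma41} but with the face polynomials $f_{t,\Delta_0}$ and $g_{j,\Delta_j}$ playing the role of $f_t$ and $g_j$. Suppose the conclusion fails; then there exist sequences $\{t^k\} \subset [0,1]$ and $c^k \in \Sigma_{\infty}(f_{t^k}|_S)$ with $|c^k| \to \infty$. Each $c^k$ comes with data $(I^k, J^k, q^k, x^k, \{\lambda_j^k\})$ and associated faces $(\Delta_0^k, \{\Delta_j^k\})$. Because $\Gamma(f_t)$ (by hypothesis~(i)) and $\Gamma(g_j)$ are independent of $t$, the possible faces form a finite combinatorial list; passing to subsequences one may fix $I^k = I$, $J^k = J$, $\Delta_0^k = \Delta_0$, $\Delta_j^k = \Delta_j$, and $t^k \to t_0 \in [0,1]$.

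Next, apply the Curve Selection Lemma at infinity to the resulting semi-algebraic family over $[0,1]$ to extract analytic curves $t(s)$, $x(s) \in \mathbb{C}^{*I}$, and $\lambda_j(s) \in \mathbb{C}$ for $j \in J$, $s \in (0,\epsilon)$, satisfying $t(s) \to t_0$, $|f_{t(s),\Delta_0}(x(s))| \to \infty$, $g_{j,\Delta_j}(x(s)) \equiv 0$, and the Lagrange identity $\nabla f_{t(s),\Delta_0}(x(s)) + \sum_{j\in J}\lambda_j(s)\nabla g_{j,\Delta_j}(x(s)) = 0$. Expand $x_i(s) = y_i^0 s^{p_i} + \cdots$ with $y_i^0 \ne 0$ and $p_i \in \mathbb{Q}$ for $i \in I$, set $p := (p_i)_{i\in I}$ (extended by $0$ outside $I$), and introduce the refined faces $\Delta_0' := \Delta(p,\Delta_0)$, $\Delta_j' := \Delta(p,\Delta_j)$, with weighted degrees $d_0' := d(p,\Delta_0)$ and $d_j' := d(p,\Delta_j)$. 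The divergence of $|f_{t(s),\Delta_0}(x(s))|$ forces $d_0' < 0$, while $g_{j,\Delta_j}(x(s)) \equiv 0$ gives $g_{j,\Delta_j'}(y^0) = 0$ for every $j \in J$.

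The key step is to apply the Newton non-degeneracy of $f_{t_0}|_S$ at the combined weight $\tilde q := q + \rho p$, where $q$ is any vector realizing $\Delta_0$ and $\Delta_j$ at $t_0$ with $\min_{i\in I} q_i < 0$ and $d(q,\Gamma(f_{t_0}|_{\mathbb{C}^I})) = 0$, and $\rho > 0$ is chosen sufficiently small. Standard polyhedral refinement then yields $\min_{i\in I}\tilde q_i < 0$, $\Delta(\tilde q,\Gamma(f_{t_0}|_{\mathbb{C}^I})) = \Delta_0'$, $\Delta(\tilde q,\Gamma(g_j|_{\mathbb{C}^I})) = \Delta_j'$, and $d(\tilde q,\Gamma(f_{t_0}|_{\mathbb{C}^I})) = \rho d_0' < 0$. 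Consequently both clauses of Definition~\ref{Definition21} become available for the face polynomials $f_{t_0,\Delta_0'}$ and $g_{j,\Delta_j'}$ at the point $y^0$ (extended to $\mathbb{C}^{*n}$ by setting $y_i^0 = 1$ for $i \notin I$).

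Finally, write $\lambda_j(s) = c_j s^{m_j} + \cdots$ with $c_j \ne 0$ for $j \in J^* := \{j \in J : \lambda_j \not\equiv 0\}$, and set $\ell := \min\bigl(d_0',\min_{j \in J^*}(m_j+d_j')\bigr)$. A standard order-of-vanishing analysis of the Lagrange identity splits into three cases, exactly as in the proof of Lemma~\ref{Lemma41}. In the two subcases with $\ell = d_0'$, Euler's relation combined with $g_{j,\Delta_j'}(y^0) = 0$ forces $f_{t_0,\Delta_0'}(y^0) = 0$, while the Lagrange identity exhibits a nontrivial linear dependence among $\nabla f_{t_0,\Delta_0'}(y^0)$ and $\{\nabla g_{j,\Delta_j'}(y^0)\}_{j \in J}$ at a point of the variety $\{f_{t_0,\Delta_0'} = 0\} \cap \bigcap_{j \in J}\{g_{j,\Delta_j'} = 0\}$, contradicting Definition~\ref{Definition21}(ii) at $\tilde q$. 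In the subcase $\ell < d_0'$, the analogous analysis produces a nontrivial linear dependence among $\{\nabla g_{j,\Delta_j'}(y^0)\}_{j \in J^{**}}$ at a point of $\bigcap_{j \in J}\{g_{j,\Delta_j'} = 0\}$, contradicting Definition~\ref{Definition21}(i). The chief obstacle is the face-refinement bookkeeping, particularly verifying that the NNDI hypothesis transfers cleanly from the original weight $q$ (with faces $\Delta_0, \Delta_j$) to the refined weight $\tilde q$ (with faces $\Delta_0', \Delta_j'$), uniformly over a subsequential limit $t_0 \in [0,1]$.
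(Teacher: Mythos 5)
Your proposal is correct and follows essentially the same route as the paper: contradiction, Curve Selection Lemma at infinity, power-series expansion of the extracted curve, refined faces, Euler relation, and a three-way order-of-vanishing case split, each case colliding with Definition~\ref{Definition21}. Your explicit construction of the combined weight $\tilde q = q + \rho p$ in order to realize the refined faces $\Delta_0', \Delta_j'$ as faces $\Delta(\tilde q, \cdot)$ with $\min_{i\in I}\tilde q_i<0$ and $d(\tilde q,\Gamma(f_{t_0}|_{\mathbb{C}^I}))<0$ is a helpful sharpening of a point the paper's proof leaves implicit when it invokes ``the non-degeneracy condition.''
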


\begin{proof}
Suppose the assertion of the lemma is false. By the Curve Selection Lemma at infinity (see \cite{Nemethi1992} or \cite{HaHV2017}), we can find a nonempty set $I \subset \{ 1, \ldots, n \}$ with ${f_{t}}|_{\mathbb{C}^I} \not\equiv 0,$ a (possibly empty) set $J \subset \{j \in \{1, \ldots, p\} \ : \ g_j|_{\mathbb{C}^I} \not \equiv 0\},$ a vector $ q \in \mathbb{R}^n $ with $ \min_{ i \in I} q_i < 0$ and $ 0 = d( q, \Gamma( {f_{t}}|_{{\mathbb{C}^I}}))$, and analytic curves
\begin{eqnarray*}
\phi \colon (0,\epsilon) \rightarrow (\mathbb{C}^*)^I, \quad 
t \colon (0,\epsilon) \rightarrow [0, 1], \quad \textrm{ and } \quad 
\lambda_j \colon (0,\epsilon) \rightarrow \mathbb{C}, j \in J,
\end{eqnarray*}
such that the following conditions hold
\begin{enumerate}
\item[(a1)] $\|\phi(s)\| \rightarrow \infty$ as $s \rightarrow 0$;
\item[(a2)] $t(s) \rightarrow t_0 \in [0, 1]$ as $s \rightarrow 0$;
\item[(a3)] $f_{t(s), \Delta_0}( \phi(s)) \rightarrow \infty$ as $s \rightarrow 0;$
\item[(a4)] $g_{ j, \Delta_j}(\phi(s)) = 0$ for all $j \in J$ and all $s \in ( 0, \epsilon);$
\item[(a5)] $\nabla f_{t(s), \Delta_0}( \phi(s)) + \sum_{j \in J} \lambda_j(s) \nabla g_{ j, \Delta_j}( \phi(s)) = 0\textrm{ for all } s \in ( 0, \epsilon)$,
\end{enumerate}
where $\Delta_0 := \Delta(q, \Gamma({f_{t}}|_{\mathbb{C}^I})$ and $\Delta_j := \Delta( q, \Gamma({g_j}|_{{\mathbb{C}^I}}))$ for $j\in J.$

For $i \in I,$ we can write the curve $\phi_i$ in terms of parameter, say
\begin{eqnarray*}
\phi_i(s) = x^0_i s^{ q'_i} + \textrm{higher-order terms in }s,
\end{eqnarray*} 
where  $x_i^0 \neq 0$ and $q'_i \in \mathbb{Q}.$ Observe that $\min_{i \in I} q'_i < 0$ because of the condition (a1). 

Let $d_0$ be the minimal value of the linear function $\sum_{i \in I}\alpha_i q'_i$ on $\mathbb{R}^I\cap \Delta_0$ ($= \Delta_0$) and $\Delta_0'$ be the maximal face of $\Delta_0$ where this linear function takes its minimum value. As the Newton polyhedron $\Gamma(f_{t})$ of $f_{t}$ does not depend on $t$, we can write
\begin{eqnarray*}
f_{t(s),\Delta_0}(\phi(s)) &=& f_{ t_0,{\Delta_0'}}(x^0) s^{d_0} + \textrm{ higher-order terms in }s,\\
\frac{\partial f_{t(s),\Delta_0}(\phi(s))}{\partial x_i}  &=& \frac{\partial f_{ t_0,{\Delta_0'}}}{\partial x_i}(x^0)s^{d_0 - q'_i} + \textrm{ higher-order terms in } s \quad \textrm{ for } i \in I,
\end{eqnarray*}
where $x^0 := (x^0_1, \ldots, x^0_n)$ with $x^0_i = 1$ for $i \not \in I.$  By the condition (a3), $d_0 < 0.$ Furthermore, for $i \notin I$, the function $f_{ t_0,{\Delta_0'}}$ does not depend on the variable $x_i$, and so
\begin{eqnarray}\label{EQ10}
\frac{\partial f_{t_0, {\Delta_0'}} } {\partial x_i}(x^0) & = & 0 \quad \textrm{ for all } \quad i \not \in I.
\end{eqnarray}

Put $J_1: = \{ j \in J \ : \ \lambda_j \not\equiv 0 \}$. If $J_1 = \emptyset $, then the condition (a5) implies that
\begin{eqnarray*}
\frac{\partial f_{t(s), \Delta_0}}{\partial x_i}(\phi(s)) \ \equiv\ 0 \quad \textrm{ for all } \quad i \in I.
\end{eqnarray*}
Consequently, 
\begin{eqnarray*}
\frac{ \partial f_{ t_0,{\Delta_0'}} }{ \partial x_i}( x^0) & = & 0 \quad \textrm{ for all } \quad  i \in I. 
\end{eqnarray*}
Since $d_0 < 0$, it follows from \eqref{EQ10} and the Euler relation that
\begin{eqnarray*}
f_{ t_0, {\Delta_0'}}(x^0) & = & 0,  
\end{eqnarray*}
which contradicts the non-degeneracy condition.
Thus, $J_1 \neq \emptyset$. For $j\in J_1$, we can write
\begin{eqnarray*}
\lambda_j(s) &=& c_j s^{m_j} + \textrm{ higher-order terms in }s,
\end{eqnarray*}
where $c_j \neq 0$ and $m_j \in \mathbb{Q}.$

For each $j \in J_1$, let $d_j$ be the minimal value of the linear function $\sum_{i \in I}\alpha_i q'_i$ on $\mathbb{R}^I \cap \Delta_j$ $(=\Delta_j)$ and $\Delta_j'$ be the maximal face of $\Delta_j$ where this linear function takes its minimum value. We can write
\begin{eqnarray*}
g_{j, \Delta_j}( \phi(s)) &=& g_{j, \Delta_j'}( x^0) s^{d_j} + \textrm{ higher-order terms in }s.
\end{eqnarray*}
By the condition (a4), we have
\begin{eqnarray}\label{EQ11}
 g_{j, \Delta_j'}( x^0) & = & 0 \quad \textrm{ for all } \quad j \in J_1.
\end{eqnarray}
On the other hand, a direct calculation shows that for $i\in I$ and $j\in J_1$,
\begin{eqnarray*}
\frac{\partial g_j}{\partial x_i}(\phi(s)) &=& \frac{\partial g_{j,\Delta'_j}}{\partial x_i} (x^0) s^{d_j - q'_i} + \textrm{ higher-order terms in }s.
\end{eqnarray*}
For $i\notin I$ and $j \in J_1$, the function $g_{j,{\Delta_j'}}$ does not depend on the variable $x_i$, and so
\begin{eqnarray}\label{EQ12}
\frac{\partial g_{ j, {\Delta_j'}} } {\partial x_i}(x^0) & = & 0 \quad \textrm{ for all } \quad i \notin I \quad \textrm{ and } \quad j \in J_1.
\end{eqnarray}

The condition (a5) implies that for all $i\in I$,
\begin{eqnarray}\label{EQ13}
\frac{\partial f_{t_0, {\Delta_0'}} } { \partial x_i} ( x^0)s^{ d_0-q'_i} + \cdots + \sum_{ j \in J_2} \overline{c_j} \frac{ \partial g_{ j,\Delta_j'}} {\partial x_i}( x^0) s^{ \ell - q'_i} + \cdots \ =\ 0,
\end{eqnarray}
where $\ell := \min_{j \in J_1}( m_j + d_j)$, $J_2:=\{ j \in J_1 \ : \ \ell\ =\ m_j + d_j\}$ and the dots stand for the higher-order terms in $s$.
There are three cases to be considered.

\subsubsection*{Case 1:} $\ell\ >\ d_0$. By \eqref{EQ10} and \eqref{EQ13}, we have
$$\frac{\partial f_{ t_0,{\Delta_0'}}}{\partial x_i}(x^0)\ =\ 0 \quad \textrm{ for }\quad i=1,\ldots,n.$$
This, together with the Euler relation, implies that
$$d_0f_{ t_0,{\Delta_0'}}(x^0)\ =\ 0.$$
Hence, $f_{ t_0,{\Delta_0'}}(x^0) = 0$ because of $d_0 < 0.$ This contradicts the non-degeneracy condition.

\subsubsection*{Case 2:} $\ell\ =\ d_0$. We deduce from \eqref{EQ10}, \eqref{EQ12} and \eqref{EQ13} that
$$ \frac{\partial f_{ t_0,{\Delta_0'}}}{\partial x_i}(x^0) + \sum_{j\in J_2} \overline{c_j} \frac{\partial g_{j,\Delta'_j}}{\partial x_i}(x^0)\ =\ 0 \quad\textrm{  for  }\quad i=1,\ldots,n.$$
Consequently,
\begin{eqnarray*}
0 
& = & \sum_{i = 1}^n q'_i x^0_i \frac{\partial f_{ t_0,{\Delta_0'}}}{\partial x_i}(x^0) + \sum_{i = 1}^n \sum_{j\in J_2} \overline{c_j} q'_i x^0_i \frac{\partial g_{j,\Delta'_j}}{\partial x_i}(x^0) \\
& = & \sum_{i = 1}^n  q'_i x^0_i \frac{\partial f_{ t_0,{\Delta_0'}}}{\partial x_i}(x^0) + \sum_{j\in J_2} \overline{c_j} \sum_{i = 1}^n q'_i x^0_i \frac{\partial g_{j,\Delta'_j}}{\partial x_i}(x^0) \\
& = & d_0f_{ t_0,{\Delta_0'}}(x^0)+ \sum_{j\in J_2} \overline{c_j} d_j g_{j,\Delta'_j}(x^0) \\
& = & d_0f_{ t_0,{\Delta_0'}}(x^0),
\end{eqnarray*}
where the last equation follows from \eqref{EQ11}. Since $d_0 < 0,$ we get $f_{ t_0,{\Delta_0'}}(x^0) = 0,$ which contradicts the non-degeneracy condition.

\subsubsection*{Case 3:} $\ell\ <\ d_0$. By \eqref{EQ12} and \eqref{EQ13}, we obtain
$$ \sum_{j\in J_2} \overline{c_j} \frac{\partial g_{j,\Delta'_j}}{\partial x_i}(x^0)= 0 \quad\textrm{  for  }\quad i=1,\ldots,n.$$
This fact, together with \eqref{EQ11}, gives a contradiction with the non-degeneracy condition.
\end{proof}

\begin{lemma}[Transversality in the neighbourhood of infinity] \label{Lemma43}
Let $r$ be a positive real number such that the conclusions of Lemmas~\ref{Lemma41}~and~\ref{Lemma42} are fulfilled. Then there exists a real number $R_0 > 0$ such that for all $t \in [0,1],$ all $R \geqslant R_0$ and all $c \in \mathbb{S}^1_r,$ we have the fiber $({f}_t|_S)^{-1}(c)$ intersects transversally with the sphere $\mathbb{S}^{2n - 1}_R.$
\end{lemma}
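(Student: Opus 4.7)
The plan is to argue by contradiction in the style of Lemmas~\ref{Lemma41} and \ref{Lemma42}, combining the Puiseux analyses of those two lemmas with the sphere-normal bookkeeping that appeared in the proof of Theorem~\ref{Theorem32}.

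Suppose the conclusion fails. Then there exist sequences $R_k\to\infty$, $t^k\in[0,1]$, $c^k\in\mathbb{S}^1_r$, and $x^k\in S$ with $\|x^k\|=R_k$, $f_{t^k}(x^k)=c^k$, at which $(f_{t^k}|_S)^{-1}(c^k)$ fails to meet $\mathbb{S}^{2n-1}_{R_k}$ transversally. This failure produces complex scalars $\lambda_1^k,\ldots,\lambda_{p+1}^k$, not all zero, with
$$\nabla f_{t^k}(x^k)+\sum_{j=1}^p \lambda_j^k\,\nabla g_j(x^k)\;=\;\lambda_{p+1}^k\,x^k.$$
After extracting a subsequence, $t^k\to t_0\in[0,1]$ and $c^k\to c_0$ with $|c_0|=r>0$. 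The Curve Selection Lemma at infinity then provides analytic curves $\phi(s)$, $t(s)$, $\lambda_j(s)$ on $(0,\epsilon)$ satisfying $\|\phi(s)\|\to\infty$, $t(s)\to t_0$, $f_{t(s)}(\phi(s))\to c_0$, $g_j(\phi(s))=0$, and the same gradient identity with $\lambda_{p+1}(s)\phi(s)$ on the right-hand side.

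I would then run the Puiseux analysis exactly as in Theorem~\ref{Theorem32}: expand $\phi_i(s)=x^0_i s^{q_i}+\cdots$ on the support $I:=\{i:\phi_i\not\equiv 0\}$, and form the face polynomials $f_{t_0,\Delta_0}$ and $g_{j,\Delta_j}$, which are well-defined because hypothesis~(i) guarantees $\Gamma(f_t)$ is independent of $t$. Since $f_{t(s)}(\phi(s))\to c_0\neq 0$, the leading order forces $d_0\leqslant 0$ together with $d_0\,f_{t_0,\Delta_0}(x^0)=0$, in analogy with \eqref{EQ03}. Splitting on whether $\lambda_{p+1}\equiv 0$ or not and running the three sub-cases $\ell>d_0$, $\ell=d_0$, $\ell<d_0$ on the gradient identity, the Newton non-degeneracy of $f_{t_0}|_S$ eliminates $d_0<0$ and shows that $d_0=0$ with $c_0=f_{t_0,\Delta_0}(x^0)\in\Sigma_\infty(f_{t_0}|_S)\cup K_0(f_{t_0}|_S)$.

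By Lemmas~\ref{Lemma41} and~\ref{Lemma42}, the radius $r$ is chosen so that $K_0(f_{t_0}|_S)\cup\Sigma_\infty(f_{t_0}|_S)\subset D_r$, hence $|c_0|<r$, contradicting $|c_0|=r$. The only real obstacle is the simultaneous bookkeeping of the moving parameter $t(s)\to t_0$ and the extra spherical-normal term $\lambda_{p+1}(s)\phi(s)$; both ingredients have already been handled separately in Lemma~\ref{Lemma42} and in Theorem~\ref{Theorem32}, so combining them requires careful tracking of Puiseux exponents but no genuinely new idea.
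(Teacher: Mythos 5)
Your proposal is correct and takes essentially the same route as the paper: contradiction via the Curve Selection Lemma at infinity, the Puiseux/face-polynomial analysis with the case split on $\ell$ versus $d_0$, and the concluding contradiction $c_0 = f_{t_0,\Delta_0}(x^0) \in \Sigma_\infty(f_{t_0}|_S) \subset D_r$ against $|c_0| = r$. One detail you should make explicit: you write the non-transversality relation with coefficient $1$ on $\nabla f_{t^k}(x^k)$, whereas a priori the relation is $\lambda_0(s)\nabla f_{t(s)}(\phi(s)) + \sum_j \lambda_j(s)\nabla g_j(\phi(s)) = \lambda_{p+1}(s)\phi(s)$ with $\lambda_0$ possibly vanishing; the paper invokes Lemma~\ref{Lemma31} (transversality of $S$ with large spheres) together with Lemma~\ref{Lemma41} to rule out $\lambda_0 \equiv 0$ and $\lambda_{p+1}\equiv 0$ before normalizing $\lambda_0 \equiv 1$, a step your sketch skips.
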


\begin{proof}
If the assertion is not true, then by the Curve Selection Lemma at infinity (see \cite{Nemethi1992} or \cite{HaHV2017}), there exist $t_0 \in [0,1]$, $c \in  \mathbb{S}^1_r$ and analytic curves
\begin{eqnarray*}
\phi \colon ( 0, \epsilon ) \rightarrow \mathbb{C}^n, \quad 
t \colon ( 0, \epsilon ) \rightarrow [0, 1], \quad \textrm{ and } \quad 
\lambda_j \colon ( 0, \epsilon) \rightarrow \mathbb{C}, j = 0, 1, \ldots, p + 1,
\end{eqnarray*}
satisfying the following conditions
\begin{enumerate}
\item[(a1)] $\Vert \phi(s) \Vert \rightarrow \infty$ as $s \rightarrow 0;$
\item[(a2)] $t(s) \rightarrow t_0$ as $s \rightarrow 0;$
\item[(a3)] $f_{t(s)}(\phi(s)) \rightarrow c$ as $s \rightarrow 0;$
\item[(a4)] $g_j( \phi(s)) = 0$ for all $j = 1, \ldots, p,$ and all $s\in (0,\epsilon);$
\item[(a5)] $\lambda_0(s) \nabla f_{t(s)} (\phi(s)) + \sum_{j = 1}^p \lambda_j(s) \nabla g_j( \phi(s)) = \lambda_{p+1}(s) {\phi(s)} \textrm{ for all } s\in (0, \epsilon)$.
\end{enumerate}

Put $I := \{ i \ : \ \phi_i \not\equiv 0 \}$. By the condition (a1), $I \neq  \emptyset$. For $i \in I$, we can write the curve $\phi_i$ in terms of parameter, say
\begin{eqnarray*}
\phi_i(s) &=& \ x^0_i s^{q_i} + \textrm{higher-order terms in }s,
\end{eqnarray*}
where $x_i^0 \neq 0$ and $q_i \in \mathbb{Q}.$ Observe that $\min_{i \in I} q_i < 0,$ because of the condition (a1).

By the condition (a3) and the fact that $|c| = r > 0,$ we have $\mathbb{R}^I \cap \Gamma(f_{t}) \neq \emptyset.$ Let $d_0$ be the minimal value of the linear function $\sum_{i \in I}\alpha_i q_i$ on $ \mathbb{R}^I \cap \Gamma(f_{t})$ and $\Delta_0$ be the maximal face of $\mathbb{R}^I \cap \Gamma(f_{t})$ where this linear function takes its minimum value. As the Newton polyhedron $\Gamma(f_{t})$ of $f_{t}$ does not depend on $t$, we can write
\begin{eqnarray*}
f_{t(s)}(\phi(s)) &=& f_{ t_0,{\Delta_0}}(x^0) s^{d_0} + \textrm{ higher-order terms in }s,\\
\frac{\partial f_{t(s)}}{\partial x_i} (\phi(s)) &=& \frac{\partial f_{ t_0,{\Delta_0}}}{\partial x_i}(x^0)s^{d_0 - q_i} + \textrm{ higher-order terms in } s \quad \textrm{ for } i \in I,
\end{eqnarray*}
where $x^0 := (x^0_1, \ldots, x^0_n)$ with $x^0_i = 1$ for $i \not \in I.$ 
The condition (a3) and the fact that $|c| = r > 0$ together imply that
\begin{eqnarray}\label{EQ14}
d_0 \ \leqslant \  0 \quad \textrm{ and }\quad d_0 f_{ t_0,{\Delta_0}}(x^0)\ =\ 0.
\end{eqnarray}
Furthermore, for $i \notin I$, the function $f_{ t_0,{\Delta_0}}$ does not depend on the variable $x_i$, and so
\begin{eqnarray}\label{EQ15}
\frac{\partial f_{ t_0,{\Delta_0}}}{\partial x_i}(x^0) & = & 0 \quad \textrm{ for all } \quad i \not \in I.
\end{eqnarray}
 
On the other hand, we deduce from the condition (a5), Lemmas~\ref{Lemma31}~and~\ref{Lemma41} that 
$\lambda_0 \not\equiv 0$ and $\lambda_{p + 1} \not\equiv 0$ (perhaps reducing $\epsilon$). 
Replacing $\lambda_j$ by $\frac{\lambda_j}{\lambda_0}$ if necessary, we may assume that $\lambda_0 \equiv 1$. Put $J:= \{ j\in \{1, \ldots, p \} \ : \ \lambda_j \not\equiv 0 \}$. For $j \in J \cup \{p+1\},$ we can write 
$$\lambda_j(s) \ =\ c_j s^{m_j} +  \textrm{higher-order terms in }s ,$$
where $c_j \neq 0$ and $m_j\in \mathbb{Q}.$ 

Put $J_1:= \{ j\in J \ : \ {g_j}|_{\mathbb{C}^I} \not\equiv 0\}$. There are two cases to be considered.

\subsubsection*{Case 1} $J_1 = \emptyset.$ 
The condition (a5) implies that for $i \in I,$ 
\begin{eqnarray*}
\frac{ \partial f_{t_0, \Delta_0}} {\partial x_i}(x^0) s^{d_0 - q_i} + \cdots &=& \overline{c_{p+1}} \overline{x^0_i} s^{m_{p+1} + q_i} +  \cdots,
\end{eqnarray*}
where the dots stand for higher-order terms in $s$. Clearly, $d_0 - q_i \leqslant m_{p+1} + q_i$ for all $ i \in I$, and so
\begin{eqnarray}\label{EQ16}
d_0 - m_{p+1} & \leqslant & 2\min_{i \in I} q_i \ <\ 0.
\end{eqnarray}

Put $I_1 := \{i \in I \ : \ d_0 - q_i = m_{p+1} + q_i\}.$ Obverse that $i \in I \setminus I_1$ if, and only if, 
\begin{eqnarray}\label{EQ17}
\frac{\partial f_{t_0,\Delta_0}} {\partial x_i}(x^0) &=& 0, 
\end{eqnarray}
and in this case $d_0 - q_i < m_{p+1} + q_i.$

If $I_1= \emptyset,$ then we get from \eqref{EQ15} and \eqref{EQ17} 
\begin{eqnarray*}
\frac{\partial f_{t_0,\Delta_0}} {\partial x_i}(x^0) &=& 0 \quad \textrm{ for all }\quad i=1,\ldots, n.
\end{eqnarray*}
Hence, \eqref{EQ14} and the non-degeneracy condition together imply that $d_0 = 0.$ By the condition (a3), $c = f_{t_0,\Delta_0}(x^0) \in \Sigma_\infty(f_{t_0}|_S),$ which contradicts our assumption.

If $I_1 \neq \emptyset$, then for all $i \in I_1$,
\begin{eqnarray*}
\frac{\partial f_{t_0,\Delta_0}} {\partial x_i}(x^0)  = \overline{c_{p+1}} \overline{x^0_i} \quad \textrm{ and }\quad d_0 - m_{p+1} = 2 q_i. 
\end{eqnarray*}
Hence, the Euler relation, \eqref{EQ15} and \eqref{EQ17} together imply that
\begin{eqnarray*}
d_0 f_{t_0,\Delta_0} &=& \sum_{i=1}^n \frac{\partial f_{t_0,\Delta_0}}{\partial x_i}(x^0)x^0_i q_i \ =\ \sum_{i\in I_1} \frac{\partial f_{t_0,\Delta_0}}{\partial x_i}(x^0)x^0_i q_i = \sum_{i \in I_1} |x^0_i|^2 \frac{d_0 - m_{p+1}}{2} \overline{c_{p+1}} \ne 0,
\end{eqnarray*}
where the last inequality follows from \eqref{EQ16}. This, combined with \eqref{EQ14}, implies a contradiction.
  
\subsubsection*{Case 2} $J_1 \neq \emptyset.$ 
For each $j\in J_1$, let $d_j$ be the minimal value of the linear function $\sum_{i \in I}\alpha_i q_i$ on $ \mathbb{R}^I \cap \Gamma(g_j)$ and $\Delta_j$ be the maximal face of $\mathbb{R}^I \cap \Gamma(g_j)$ where this linear function takes its minimum value. We can write
$$g_j(\phi(s))\ =\ g_{j,\Delta_j}(x^0) s^{d_j} + \textrm{ higher-order terms in }s.$$
By the condition (a4), then
\begin{eqnarray}\label{EQ18}
 g_{j,\Delta_j}(x^0) & = & 0 \quad \textrm{ for all } \quad j \in J_1,
\end{eqnarray}
On the other hand, for $i\in I$ and $j\in J_1$,
$$\frac{\partial g_j}{\partial x_i}(\phi(s)) \ =\ \frac{\partial g_{j,\Delta_j}}{\partial x_i} (x^0) s^{d_j - q_i} + \textrm{ higher-order terms in }s.$$
For $i\notin I$ and $j\in J_1$, the function $g_{j,\Delta_j}$ does not depend on the variable $x_i$, and hence,
\begin{eqnarray}\label{EQ19}
 \frac{\partial g_{j,\Delta_j}}{\partial x_i}(x_0)\ =\ 0.
\end{eqnarray}

Form the condition (a5), for $i\in I$ we have
\begin{eqnarray}\label{EQ20}
\frac{\partial f_{t_0,\Delta_0}}{\partial x_i}(x^0)s^{d_0-q_i}+\cdots+\sum_{j\in J_2} \overline{c_j} \frac{\partial g_{j,\Delta_j}}{\partial x_i}(x^0)s^{\ell-q_i}+\cdots 
&=& \overline{c_{p+1}} \overline{x^0_i}s^{m_{p+1}+q_i}+\cdots 
\end{eqnarray}
where $\ell:= \min_{j\in J_1}(m_j+d_j)$ and $J_2:=\{j \in J_1 \ : \ m_j+d_j=\ell\}$ and the dots stand for higher-order terms in $s$. There are three cases to be considered.

\subsubsection*{Case 2.1} $\ell > d_0.$
The same argument as in Case 1 yields a contradiction.

\subsubsection*{Case 2.2} $\ell = d_0.$
From \eqref{EQ20} we have $d_0 - q_i \leqslant m_{p + 1} + q_i$ for all $i \in I.$ Therefore
\begin{eqnarray*}
d_0 - m_{p + 1} & \leqslant  & 2 \min_{i \in I}  q_i \ < \ 0.
\end{eqnarray*}

Put $I_2 :=\{i \in I \ : \ d_0 - q_i = m_{p+1} + q_i\}$. Hence, $i \in I\setminus I_2$ if, and only if,
$$ \frac{\partial f_{t_0,\Delta_0}}{\partial x_i}(x^0) + \sum_{j\in J_2} \overline{c_j} \frac{\partial g_{j,\Delta_j}}{\partial x_i}(x^0)= 0,$$ and in this case $d_0-q_i < m_{p+1} + q_i.$

If $I_2 = \emptyset,$ then 
$$ \frac{\partial f_{t_0, \Delta_0}}{\partial x_i}(x^0) + \sum_{j\in J_2} \overline{c_j}  \frac{\partial g_{j,\Delta_j}}{\partial x_i}(x^0)\ =\ 0 \quad \textrm{ for all }\quad i = 1, \ldots, n.$$
Hence, the non-degeneracy condition, \eqref{EQ14} and \eqref{EQ18} together imply that $d_0 = 0.$ Consequently, by the condition (a2), $c  = f_{t_0, \Delta_0} (x^0) \in \Sigma_\infty(f_{t_0}|_S),$ which contradicts our assumption.

If $I_2\neq\emptyset$, then from \eqref{EQ20} we have for all $i \in I_2,$
\begin{eqnarray*}
\frac{\partial f_{t_0,\Delta_0}}{\partial x_i}(x^0) + \sum_{j\in J_2} \overline{c_j}   \frac{\partial g_{j,\Delta_j}}{\partial x_i}(x^0) & = &  \overline{c_{p+1}} \overline{x^0_i}, \\
d_0-m_{p+1} & = & 2q_i.
\end{eqnarray*}
This, together with the Euler relation, \eqref{EQ14}, \eqref{EQ15}, \eqref{EQ18} and \eqref{EQ19}, yields
\begin{eqnarray*} 
0 & = & d_0 f_{t_0,\Delta_0}(x^0)+ \sum_{j\in J_2} \overline{c_j} d_j g_{j,\Delta_j}(x^0)\\
& = &\sum_{i = 1}^n q_i x^0_i \frac{\partial f_{ t_0,\Delta_0}}{\partial x_i}(x^0) + \sum_{j\in J_2} \sum_{i = 1}^n \overline{c_j} q_i x^0_i \frac{\partial g_{j,\Delta_j}}{\partial x_i}(x^0) \\ 
& = & \sum_{i = 1}^n q_i x^0_i \left (\frac{\partial f_{ t_0,\Delta_0}}{\partial x_i}(x^0)+ \sum_{j\in J_2} \overline{c_j} \frac{\partial g_{j,\Delta_j}}{\partial x_i}(x^0) \right) \\
& = & \sum_{i \in I_2} q_i x^0_i \left(\frac{\partial f_{ t_0,\Delta_0}}{\partial x_i}(x^0)+ \sum_{j\in J_2} \overline{c_j}  \frac{\partial g_{j,\Delta_j}}{\partial x_i}(x^0) \right) \\
& = &  \sum_{i \in I_2} |x^0|_i^2 \frac{d_0-m_{p+1}}{2} \overline{c_{p + 1}} \ \ne \ 0,
\end{eqnarray*}
which is impossible.

\subsubsection*{Case 2.3} $\ell < d_0.$
From \eqref{EQ20} we have $\ell - q_i \leqslant  m_{p + 1} + q_i$ for all $i \in I.$ Therefore
\begin{eqnarray*}
\ell - m_{p + 1} & \leqslant  & 2 \min_{i \in I}  q_i \ < \ 0.
\end{eqnarray*}

Put $I_3 :=\{i \in I \ : \ \ell - q_i = m_{p+1} + q_i\}$. Hence, $i \in I\setminus I_3$ if, and only if,
$$\sum_{j\in J_2} \overline{c_j} \frac{\partial g_{j,\Delta_j}}{\partial x_i}(x^0)= 0,$$ and in this case $\ell - q_i < m_{p+1} + q_i.$

If $I_3 = \emptyset,$ then 
$$ \sum_{j\in J_2} \overline{c_j}  \frac{\partial g_{j,\Delta_j}}{\partial x_i}(x^0)\ =\ 0 \quad \textrm{ for all } \quad i = 1, \ldots, n,$$
which, together with \eqref{EQ18}, leads to a contradiction with the non-degeneracy condition.

If $I_3 \ne \emptyset,$ then from \eqref{EQ20} we have  for all $ i\in I_3,$
\begin{eqnarray*}
\sum_{j\in J_2} \overline{c_j}   \frac{\partial g_{j,\Delta_j}}{\partial x_i}(x^0) &  = & \overline{c_{p+1}} \overline{x^0_i}, \\
\ell-m_{p+1} & = & 2q_i.
\end{eqnarray*}
This, together with the Euler relation and \eqref{EQ18}, yields 
\begin{eqnarray*} 
0 & = &  \sum_{j\in J_2} \overline{c_j} d_j g_{j,\Delta_j}(x^0)\\
& = & \sum_{j\in J_2} \sum_{i = 1}^n \overline{c_j} q_i x^0_i \frac{\partial g_{j,\Delta_j}}{\partial x_i}(x^0) \\ 
& = & \sum_{i = 1}^n \sum_{j\in J_2} \overline{c_j} q_i x^0_i \frac{\partial g_{j,\Delta_j}}{\partial x_i}(x^0) \\ 
& = & \sum_{i \in I_3} q_i x^0_i \left(\sum_{j\in J_2} \overline{c_j} \frac{\partial g_{j,\Delta_j}}{\partial x_i}(x^0) \right) \\
& = &  \sum_{i \in I_3} |x^0|_i^2 \frac{\ell - m_{p+1}}{2} \overline{c_{p + 1}} \ \ne \ 0,
\end{eqnarray*}
which is impossible.
\end{proof}

We now can complete the proof of the Theorem~\ref{Theorem41}.

\begin{proof}[Proof of Theorem~\ref{Theorem41}]
Let $r$ and $R_0$ be the positive real numbers such that the conclusions of Lemmas~\ref{Lemma31}, \ref{Lemma41}, \ref{Lemma42} and \ref{Lemma43} are fulfilled. By Corollary~\ref{Corrolary31}, then $B(f_t|_S) \subset D_r$ for all $t \in [0, 1].$ Furthermore, for all $(t, x) \in X := \{(t, x) \in [0, 1] \times S \ : \ f(t, x) \in \mathbb{S}_r^1, \|x\| \geqslant R_0\},$ the vectors 
$\nabla f_t(x), \nabla g_1(x), \ldots, \nabla g_p(x),$ and $\overline{x}$ are $\mathbb{C}$-linearly independent. Therefore, we can find a smooth map $\mathbf{v}_1
 \colon X \rightarrow \mathbb{C}^n, (t, x) \mapsto \mathbf{v}_1(t, x),$ satisfying the following conditions
\begin{itemize}
\item[(a1)] $\langle \mathbf{v}_1(t, x), {\nabla {f_{t}}(x)}\rangle = - \frac{\partial f_{t}}{\partial t}(x);$

\item[(a2)] $\langle \mathbf{v}_1(t, x), {\nabla {g_j}(x)} \rangle = 0$ for $j = 1,\ldots, p;$

\item[(a3)] $\langle \mathbf{v}_1(t, x), x \rangle = 0.$ 
\end{itemize}

We take arbitrary (but fixed) $\epsilon > 0.$ Since $\mathbb{S}_r^1 \cap K_0(f_t|_S) = \emptyset$ for all $t \in [0, 1],$ the vectors $\nabla f_t(x), \nabla g_1(x), \ldots, \nabla g_p(x)$ are $\mathbb{C}$-linearly independent for all $(t, x) \in Y := \{(t, x) \in [0, 1] \times S \ : \ f(t, x) \in \mathbb{S}_r^1, \|x\| \leqslant R_0 + \epsilon\}.$ Consequently, there exists a smooth map $\mathbf{v}_2  \colon Y \rightarrow \mathbb{C}^n, (t, x) \mapsto \mathbf{v}_2(t, x),$ such that the following conditions hold
\begin{itemize}
\item[(a4)] $\langle \mathbf{v}_2(t, x), {\nabla {f_{t}}(x)}\rangle = - \frac{\partial f_{t}}{\partial t}(x);$

\item[(a5)] $\langle \mathbf{v}_2(t, x), {\nabla {g_j}(x)} \rangle = 0$ for $j = 1,\ldots, p.$
\end{itemize}

Next, by patching the maps $\mathbf{v}_1$ and $\mathbf{v}_2$ together using a smooth partition of unity, we get a smooth map 
$$\mathbf{v} \colon  \{(t, x) \in [0, 1] \times S \ : \ f(t, x) \in \mathbb{S}_r^1 \} \rightarrow \mathbb{C}^n, \quad (t, x) \mapsto \mathbf{v}(t, x),$$ 
such that the following conditions hold:
\begin{itemize}
\item[(a6)] $\langle \mathbf{v}(t, x), {\nabla {f_{t}}(x)}\rangle = - \frac{\partial f_{t}}{\partial t}(x);$

\item[(a7)] $\langle \mathbf{v}(t, x), {\nabla {g_j}(x)} \rangle = 0$ for $j = 1,\ldots, p;$

\item[(a8)] $\langle \mathbf{v}(t, x), x \rangle = 0$ provided that $\|x\| \geqslant  R_0 + \epsilon.$
\end{itemize}

Finally we can check that for each $x \in f_0^{-1}(\mathbb{S}_r^1) \cap S,$ there exists a unique $C^\infty$-map $\varphi \colon [0, 1] \rightarrow \mathbb{C}^n$ such that
\begin{eqnarray*}
\varphi'(t) &=& \mathbf{v}(t, \varphi(t)), \quad \varphi(0) \ = \  x.
\end{eqnarray*}
Moreover, for each $t \in [0, 1],$ the map 
$$\Phi_t \colon f_0^{-1}(\mathbb{S}_r^1) \cap S \rightarrow f_t^{-1}(\mathbb{S}_r^1) \cap S, \quad x \mapsto \varphi(t),$$ 
is well-defined and is a $C^\infty$-diffeomorphism, which makes the following diagram commutes
\begin{equation*}
\CD f_0^{-1}(\mathbb{S}_r^1) \cap S @> \Phi_t >> f_t^{-1}(\mathbb{S}_r^1) \cap S  \\
@V f_0 VV @V f_t VV\\
\mathbb{S}_r^1 @> \mathrm{id} >> \mathbb{S}_r^1
\endCD
\end{equation*}
where $\mathrm{id} $ denotes the identity map.
\end{proof}


\end{document}